\newcommand{\globalcolor}[1]{%
  \color{#1}\global\let\default@color\current@color
}
\newtheorem{lemma}{Lemma}[section]
\newtheorem{claim}[lemma]{Claim}
\newtheorem{theorem}[lemma]{Theorem}
\newtheorem{prop}[lemma]{Proposition}
\newtheorem{conj}[lemma]{Conjecture}
\newtheorem{ques}[lemma]{Question}
\theoremstyle{definition}
\newtheorem{definition}[lemma]{Definition}
\newcommand{\card}[1]{\left| #1 \right|}
\date{}
\title{\vspace{-0.9cm}Isomorphic Bisections of Cubic Graphs}
\author{S. Das\thanks{Institut f\"ur Mathematik, Freie Universit\"at Berlin, 14195 Berlin, Germany. E-mail: \textsf{shagnik@mi.fu-berlin.de}. Research supported by the Deutsche Forschungsgemeinschaft (DFG) project 415310276.}
\and A. Pokrovskiy\thanks{Department of Mathematics, University College London. E-mail: \textsf{dr.alexey.pokrovskiy@gmail.com}.}
\and B. Sudakov\thanks{Department of Mathematics, ETH, 8092 Zurich, Switzerland. E-mail: \textsf{benjamin.sudakov@math.ethz.ch}. Research supported in part by SNSF grant 200021\_196965.}
}
\begin{document}
\maketitle

\begin{abstract}
Graph partitioning, or the dividing of a graph into two or more parts based on certain conditions, arises naturally throughout discrete mathematics, and problems of this kind have been studied extensively. In the 1990s, Ando conjectured that the vertices of every cubic graph can be partitioned into two parts that induce isomorphic subgraphs.
Using probabilistic methods together with delicate recolouring arguments, we prove Ando's conjecture for large connected graphs.

\

\textbf{Keywords:} cubic graphs; clustered colouring.
\end{abstract}

\section{Introduction} \label{sec:intro}

Graph theory enjoys applications to a wide range of disciplines because graphs are incredibly flexible mathematical structures, capable of modelling very complex systems. The study of complicated networks motivates the following important question: to what extent can a graph be decomposed into simpler subgraphs?

Spearheading this line of research is the classic problem of graph colouring, one of the oldest branches of graph theory. When colouring a graph, one seeks to partition the vertices into as few independent (edgeless) sets as possible. Indeed, subgraphs cannot get much simpler than independent sets, but such ambitious goals come at a cost. Not only is the determination of a graph's chromatic number a notoriously difficult problem, but even when dealing with sparse graph classes, one can require many colours. For instance, many $d$-regular graphs cannot be partitioned into much fewer than $d$ independent sets.

Often one does not want to have such a large number of parts, and so it is natural to ask what can be achieved with fewer colours. An early result along these lines was provided by Lov\'asz~\cite{Lov66}, who proved that, given a graph $G$ of maximum degree $d$, some number of colours $t$, and a sequence $d_1, d_2, \hdots, d_t$ with $\sum_i d_i = d - t + 1$, one can $t$-colour the vertices of $G$ such that the $i$th colour class induces a subgraph with maximum degree $d_i$. In particular, we can partition any graph into two subgraphs, each with half the maximum degree.

While reducing the maximum degree certainly simplifies graphs, it still allows for large connected subgraphs within the colour classes. A different objective, therefore, is to find clustered colourings, which are colourings where each monochromatic component is of bounded size. Alon, Ding, Oporowski and Vertigan~\cite{ADOV03} proved, among other results, that any graph with maximum degree four can be two-coloured such that the largest monochromatic components are of order at most $57$. However, they also constructed six-regular graphs with arbitrarily large monochromatic components in every two-colouring. Answering one of their questions, Haxell, Szab\'o and Tardos~\cite{HST03} proved that clustered two-colourings of graphs of maximum degree five always exist. Aside from improving the bounds on the monochromatic component sizes, subsequent research has sought to explore which graph classes admit clustered colourings, and what is possible with more colours. A more general setting, in which the size of monochromatic components can grow with the size of graph, has also been studied. For example, Linial, Matou\v{s}ek, Sheffet and Tardos~\cite{LMST} proved that any planar $n$-vertex graph has a two-colouring in which all monochromatic components have size at most $O(n^{2/3})$, and this is tight. For more results on vertex colourings with small 
monochromatic components, we refer the interested reader to the survey of Wood~\cite{Woo18}.

The above results show that when we restrict our attention to graphs of small maximum degree, there is less room for complexity, and therefore we can prove strong partitioning results. One might further try to achieve more than simply bounding the size of monochromatic components. Indeed, it appears that in the case of cubic graphs, it is also possible to control the structure of such components. An early result along these lines is due to Akiyama, Exoo and Harary~\cite{AEH80}, who proved that every cubic graph admits a two-\emph{edge}-colouring in which every monochromatic component is a path, with a short proof of this result provided by Akiyama and Chv\'atal~\cite{AC81} soon after. Although these results allowed for paths of unbounded length, Bermond, Fouquet, Habib and P\'eroche~\cite{BFHP84} conjectured that paths of length at most five suffice, a result that would be best possible. Partial results, with larger but finite bounds on the path lengths, were obtained by Jackson and Wormald~\cite{JW96} and by Aldred and Wormald~\cite{AW98}, before the conjecture was proven by Thomassen~\cite{Tho99} in 1999.

\begin{theorem}\label{thm:thomassen}
The edges of any cubic graph can be two-coloured such that each monochromatic component is a path of length at most five.
\end{theorem}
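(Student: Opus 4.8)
The plan is to argue by extremality and local recolouring. By the Akiyama--Exoo--Harary theorem, $G$ has a \emph{path-colouring}: a $2$-edge-colouring in which every monochromatic component is a path. Since the components of each colour class are paths (hence have maximum degree $2$), every vertex of the cubic graph $G$ has degree $1$ or $2$ in each colour, so both classes are spanning linear forests; a short double count then shows that the total number of monochromatic paths is always $|V(G)|/2$. Among all path-colourings of $G$, fix one that is extremal for a suitable objective penalising long paths --- for instance, minimise lexicographically first the number of monochromatic paths of length at least $6$, then the total number of edges contained in such paths, and then $\sum_P \ell(P)^2$ over all monochromatic paths $P$. It suffices to prove that the first coordinate equals $0$.

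Suppose not, and let $P = v_0 v_1 \cdots v_k$, $k \ge 6$, be a longest monochromatic path, coloured red. The aim is to recolour a short stretch of edges near an endpoint of $P$ so as to break $P$ into strictly shorter red paths without worsening the objective. The elementary move is to recolour an edge $v_i v_{i+1}$ with $1 \le i \le k-1$ from red to blue: this splits $P$ into red paths of lengths $i$ and $k-i-1$, and simultaneously merges the two blue paths having $v_i$ and $v_{i+1}$ as endpoints into a single blue path. When the useful split point lies close to $v_0$, one additionally recolours a blue edge at $v_0$ to red, so that the degree constraints are maintained and $P$ is shortened from its end rather than cut in the middle. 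One then chooses the stretch of edges and the split point so that no monochromatic cycle is created, no new monochromatic path of length at least $6$ is created, and the extremal quantity strictly drops, contradicting the choice of colouring.

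The crux, and the bulk of the work, is that these moves are tightly constrained by the local picture around $v_0, v_1, v_2$ and around the endpoints of the blue paths that get merged. A careless split produces a blue cycle (exactly when the two merged blue paths coincide) or a long blue path (when those two paths are already long, which can happen even for $k$ as small as $7$), and a careless compensating recolouring at $v_0$ may be blocked (when the relevant blue neighbour of $v_0$ is internal to its own red path) or may close $P$ up into a red cycle. Excluding the offending configurations and exhibiting a good move in each surviving case requires a careful analysis of the colours and components of the edges within distance two or three of $v_0$ along $P$ and within the incident blue paths; here one leans on the extremal properties of the colouring, together with the freedom to pick a different split point or to work symmetrically from $v_k$, to rule out the bad pictures, and in each remaining case one writes down an explicit recolouring, touching only a bounded number of edges, that strictly improves the objective. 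Connectivity is inessential for this argument, which proceeds componentwise; and since the whole proof is a single extremality argument rather than an induction on $|V(G)|$, no separate base case is required.
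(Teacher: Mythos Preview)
The paper does not prove this theorem at all: it is quoted as Thomassen's result \cite{Tho99} and used as a black box in the proof of Proposition~\ref{prop:random}. There is therefore no ``paper's own proof'' to compare against.

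As for your proposal itself, it is a plan rather than a proof. The overall strategy --- start from a path-colouring (Akiyama--Exoo--Harary), pick an extremal one under a length-penalising objective, and recolour locally near an endpoint of a longest monochromatic path --- is exactly the approach of the predecessors the paper cites: Jackson--Wormald \cite{JW96} obtained length at most $18$ this way, and Aldred--Wormald \cite{AW98} pushed it to at most $9$. Your writeup openly defers ``the crux, and the bulk of the work,'' namely the case analysis that excludes blue cycles, long merged blue paths, and blocked compensating moves, and simply asserts that in each surviving case one can write down a good recolouring. That case analysis is precisely where the difficulty lives; the fact that the na\"ive endpoint-recolouring scheme only reached $9$ before Thomassen, while the bound $5$ is tight, is strong evidence that the missing analysis is substantial and not a routine exercise. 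You also give no argument that your particular lexicographic objective is adequate: it is entirely possible that every available local move near $v_0$ (and, symmetrically, $v_k$) creates a blue path of length at least $6$ or a blue cycle, in which case your objective does not decrease and the contradiction fails. Without carrying out the analysis, what you have written is a description of the shape a proof might take, not a proof.
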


One can also try to find different structure in the partition than just finding path forests with small components. One attractive conjecture of Wormald~\cite{Wor87} from 1987 asks whether we can colour the edges of every cubic graph with an even number of edges so that the red and blue subgraphs form \emph{isomorphic} linear forests. This is known to hold for particular classes of graphs --- it was proved for Jaeger graphs in the work of Bermond, Fouquet, Habib and P\'eroche~\cite{BFHP84} and Wormald~\cite{Wor87}, and for some further classes of cubic graphs by Fouquet, Thuillier, Vanherpe and Wojda~\cite{FTVW09}. 
Given Wormald's conjecture, it is then natural to ask for analogues when colouring vertices rather than edges. In this direction, the following striking conjecture was made by Ando in the 1990s.
\begin{conj}\label{conj:ando}
The vertices of any cubic graph can be two-coloured such that the two colour classes induce isomorphic subgraphs.
\end{conj}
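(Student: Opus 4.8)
The plan is to construct, via probabilistic arguments, a bisection that is ``almost'' an isomorphic one, and then to eliminate the remaining discrepancies through a sequence of carefully controlled local recolourings. First the reductions: a cubic graph has an even number $n=2m$ of vertices, so the two classes of an isomorphic bisection must each have size exactly $m$; we may assume $m$ is large and, by hypothesis, that $G$ is connected. For a bisection $V(G)=A\cup B$, each vertex $v$ has some number $c(v)\in\{0,1,2,3\}$ of neighbours across the cut, hence degree $3-c(v)$ inside its own class, so the internal degree sequences of $G[A]$ and $G[B]$ are recorded by $a_i=\card{\{v\in A:c(v)=i\}}$ and $b_i=\card{\{v\in B:c(v)=i\}}$; already $G[A]\cong G[B]$ forces $a_i=b_i$ for all $i$, and finer isomorphism invariants --- counts of components of each type, of each small induced subgraph, etc.\ --- give a refining hierarchy of equalities. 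To make this hierarchy manageable I would first replace an unstructured random bisection by a structured one: applying Lov\'asz's theorem with $d=3$, $t=2$ and $d_1=d_2=1$ yields a $2$-colouring in which each colour class induces a subgraph of maximum degree at most $1$, i.e.\ a matching plus isolated vertices (alternatively a short alteration argument can be used to produce classes that are disjoint unions of short paths). For matchings, $G[A]\cong G[B]$ then reduces to the two equalities $\card{A}=\card{B}$ and $e(G[A])=e(G[B])$; but this base colouring is in general neither balanced nor edge-balanced, so these parameters still have to be corrected.

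The correction is done by local recolouring. I would introduce a repertoire of \emph{switches} --- recolouring a bounded, carefully chosen vertex set, typically an alternating path along which class membership is flipped --- and classify the effect of each switch on the class sizes, the internal edge counts, and the local degree structure that the base step supplies and that we want to keep. One then runs switches so that a potential combining the parameter discrepancies with the number of places where the simple structure has been temporarily spoilt strictly decreases, terminating when the potential reaches $0$ and the bisection is genuinely isomorphic. Probability enters again here: choosing the base colouring (or a random perturbation of it) guarantees that enough useful switch locations exist throughout $G$, so no bounded region is too rigid to be adjusted. A few degenerate local configurations --- for instance vertices lying in several short cycles, where the alternating-path structure collapses --- would be handled by separate ad hoc arguments, and the last few units of discrepancy by a tailored endgame.

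The main obstacle is the last step: showing that the recolouring actually terminates at an isomorphic bisection rather than cascading forever or deadlocking in some stubborn part of the graph. A single switch repairs one parameter but perturbs the local structure and possibly other parameters, and connectivity by itself does not obviously provide alternating structures along which all of this bookkeeping stays under control. Ruling out an endless cascade or an unbreakable obstruction seems to require a delicate, case-intensive analysis of how switches interact near dense local configurations --- and it is there, rather than in the comparatively soft probabilistic setup, that essentially all the work lies.
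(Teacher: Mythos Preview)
Your high-level architecture --- a near-isomorphic random-ish colouring followed by local corrections --- matches the paper's strategy, but the two technical ideas that make that strategy actually terminate are missing, and one of your proposed base colourings points at a harder open problem.

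\textbf{The base colouring.} Taking a Lov\'asz $2$-colouring with both classes of maximum degree $1$ and then trying to balance $\card{A},\card{B}$ and $e(G[A]),e(G[B])$ is essentially asking for an isomorphic bisection into \emph{matchings}; that is the Ban--Linial conjecture, which is strictly stronger than Ando's and still open even for large graphs. Moreover, the Lov\'asz colouring is deterministic, so you have no concentration. The paper instead applies Thomassen's theorem to split $E(G)$ into two linear forests $F_1,F_2$ with paths of length at most $5$, and takes $\varphi$ to be a uniformly random \emph{proper} $2$-colouring of $F_1$. This buys two things simultaneously: (i) every monochromatic component is a subpath of $F_2$, hence a $P_t$ with $t\le 6$, so only five component-counts need balancing; and (ii) the randomness is a product over independent path colourings, so McDiarmid gives $\card{r_{P_t}-b_{P_t}}=O(\sqrt{n\log n})$ for each $t$, and the colouring is $O(\sqrt{n\log n})$ away from a bisection.

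\textbf{The correction step.} Your switch-plus-potential scheme has precisely the cascade problem you identify: fixing one parameter perturbs others, and you offer no mechanism to prevent this. The paper sidesteps the cascade entirely with a device you do not have. Using McDiarmid once more, it finds $\Omega(n)$ disjoint pairs of balls $(B_d(u_i),B_d(w_i))$ that induce \emph{isomorphic subgraphs with opposite colourings}. Inside $B_d(u_i)$ it locates a small gadget $R$ (a ``$P_t$-reducer'') admitting two colourings $\psi_1,\psi_2$ of $B_2(R)$ that agree on $N(R)\cup N^2(R)$, have the same number of red vertices, and satisfy $r_{P_t}(\psi_2)=r_{P_t}(\psi_1)-1$ while all other component counts coincide. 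Recolouring $B_2(R)$ by $\psi_2$ and simultaneously recolouring the mirror copy $B_2(\bar R)\subseteq B_d(w_i)$ by the \emph{opposite} of $\psi_1$ changes $r_{P_t}-b_{P_t}$ by exactly one, preserves the bisection, and --- because the two balls are mirror images --- leaves every other $r_H-b_H$ untouched. There is no potential function and no interaction between steps: one walks the discrepancies down to zero for $t=6,5,4,3$ in turn, using a fresh ball-pair each time; $r_{P_2}=b_{P_2}$ then follows from edge double-counting. The case analysis you anticipate does occur, but it is confined to proving that a $P_t$-reducer exists inside any radius-$50$ ball of a large connected cubic graph (a local structural lemma about geodesics), not to controlling a global recolouring process.
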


This conjecture was first mentioned in print by Abreu, Goedgebeur, Labbate and Mazzuoccolo~\cite{AGLM19}, who drew connections between Conjecture~\ref{conj:ando}, Wormald's conjecture and a conjecture of Ban and Linial~\cite{BL16} (see Section~\ref{sec:conclusion} for discussion of the latter). They further obtained computational results verifying the conjecture for graphs on at most $32$ vertices.
Hence any counterexample to Conjecture~\ref{conj:ando} must have at least $34$ vertices. Observe that any minimal counterexample must be connected, as the components of a graph can be coloured independently. In this paper we essentially resolve this problem and prove that large connected cubic graphs satisfy Ando's conjecture. This shows that there can be at most finitely many minimal counterexamples.

\begin{theorem} \label{thm:main}
Every sufficiently large connected cubic graph admits a two-vertex-colouring $\varphi$ whose colour classes induce isomorphic subgraphs.
\end{theorem}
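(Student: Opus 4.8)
The plan is to produce a first, highly structured bisection of $G$ whose two colour classes are already very close to being isomorphic, and then to close the gap with a long sequence of local recolouring moves. Write $n=|V(G)|$; a cubic graph has an even number of vertices, so every bisection $V(G)=A\sqcup B$ we use will have $|A|=|B|=n/2$, and $G[A]$ and $G[B]$ will both have maximum degree at most $3$. A graph of maximum degree at most $3$ is determined up to isomorphism by its multiset of connected components, so the goal is to force the component multisets of $G[A]$ and $G[B]$ to coincide, and the whole difficulty lies in achieving this exactly.

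For the first step I would take a structured bisection in which both colour classes are simple. One clean option is deterministic: by the $t=2$, $d_1=d_2=1$ instance of Lov\'asz's theorem recalled in the introduction, every cubic graph has a vertex $2$-colouring in which each class induces a graph of maximum degree at most $1$ --- a matching plus isolated vertices --- and such a graph is determined by just its number of edges and its number of isolated vertices; if in addition $G$ is $3$-edge-colourable one can make this colouring balanced outright, by colouring alternately along the even cycles that remain after deleting a perfect matching coming from a proper $3$-edge-colouring. Alternatively, a random balanced bisection built by a greedy process that never lets a monochromatic component grow past a small threshold keeps every component small; then, since $A$ and $B$ play symmetric roles, the expected number of components of each type on the two sides is equal, so by concentration the two multisets differ only in a sparse integer \emph{defect vector} with small coordinates. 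Either way, after step one the two induced subgraphs are simple and nearly equal.

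The heart of the proof is a recolouring toolbox that eliminates the remaining defect while preserving simplicity --- in the matching version, keeping both classes of maximum degree at most $1$ throughout, which is delicate because no single vertex can be moved across (a vertex $v\in A$ has at most one neighbour in $A$, hence at least two in $B$, and moving it would give it degree at least $2$ in $B$). The usable moves instead flip small blocks of vertices, or move one vertex one way and a compensating vertex the other, always chosen so that their net effect on the relevant parameters (edge counts, or the coordinates of the defect vector) is a small prescribed change. Here connectivity of $G$ and its size are essential: they make the local configuration on which a given move acts occur in abundance, and they let one transport a surplus of one component type along the graph to wherever the matching deficit lies. A case analysis over bounded-radius neighbourhoods in a cubic graph should show that the available moves span enough of the parameter space to cancel any defect of the kind produced above; a final clean-up disposes of parity and divisibility leftovers (a single stray component of the wrong parity, say) by an ad hoc adjustment, and the few exceptional cubic graphs too rigid for the generic moves --- together with the non-$3$-edge-colourable graphs and graphs with bridges, if the $3$-edge-colouring route is used --- are handled by hand, none of which arises once $G$ is large.

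I expect this recolouring step to be the main obstacle. One has to keep the colour classes simple at every stage, avoid reintroducing a defect already removed, and, hardest of all, prove that the moves available at typical vertices of an \emph{arbitrary} large connected cubic graph really do span every adjustment that might be needed --- which forces a long and delicate analysis of local cubic configurations together with an argument that enough flexible locations always exist. Keeping the probabilistic estimates and the deterministic repair mutually consistent, so that the structure produced at random survives the whole sequence of moves, is the other delicate point.
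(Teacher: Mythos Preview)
Your two-stage outline (a structured near-isomorphic bisection, then local repairs) matches the paper's, but both stages are missing the ideas that make it work. For the first stage, the Lov\'asz max-degree-$1$ colouring is not a bisection, and falling back on $3$-edge-colourability leaves all snarks unhandled --- there are infinitely many large connected ones, so ``handled by hand'' is not an option. The paper instead invokes Thomassen's theorem to split $E(G)$ into two linear forests $F_1,F_2$ with paths of length at most five and then takes a uniformly random \emph{proper} colouring of $F_1$; this forces every monochromatic component to be a subpath of $F_2$, hence a $P_t$ with $t\le 6$, and gives a genuine product space where McDiarmid applies to the $P_t$-counts. Your ``greedy random bisection with small components'' gestures at this but does not supply a mechanism.

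The more serious gap is in the repair stage. Swapping one vertex each way, or flipping a small block, will in general change several component counts at each location, and there is no reason the unwanted changes cancel; you also have to keep the bisection exact throughout. The paper's decisive device is to arrange, already in the random step, a linear-in-$n$ supply of \emph{pairs} of balls $B_d(u_i),B_d(w_i)$ that are isomorphic as rooted graphs but carry \emph{opposite} colourings. A local gadget (a ``$P_t$-reducer'') inside $B_d(u_i)$ admits two colourings $\psi_1,\psi_2$ with identical boundary, equal vertex counts, and identical component statistics except that $\psi_2$ has one fewer red $P_t$. To decrement the red $P_t$ count by one, you recolour $B_d(u_i)$ from $\psi_1$ to $\psi_2$ and simultaneously recolour $B_d(w_i)$ from $\bar\psi_1$ to $\bar\psi_1$ (i.e.\ leave it as the mirror of the \emph{old} colouring); the opposite-colouring symmetry between the two balls makes every side effect on other component types cancel, and the bisection is preserved automatically. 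Without this pairing trick there is no clear way to realise a single prescribed unit change in the defect vector, which is exactly the obstacle you flag but do not resolve.
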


The remainder of this paper is organized as follows. In Section~\ref{sec:proof}, we present the proof of Theorem~\ref{thm:main}. The proof uses probabilistic methods together with delicate recolouring arguments. In the process of recolouring
we use some gadgets which we call $P_t$-reducers. The existence of these structures is shown in Section~\ref{sec:reducers}. Finally, we make some concluding remarks and discuss open problems in Section~\ref{sec:conclusion}.

\vspace{0.35cm}
\noindent
{\bf Notation.}\, 
We shall call the colours used in our two-colourings red and blue. To show that the red and blue subgraphs are isomorphic, we will need to keep track of the monochromatic components. Given a fixed graph $H$ and a red-/blue-colouring $\varphi$ of the vertices of a graph $G$, we denote by $r_H(G, \varphi)$ the number of red components of $G$ under $\varphi$ that are isomorphic to $H$, and define $b_H(G,\varphi)$ similarly for the blue components. Paths will play a significant role in our argument, and we write $P_t$ for the path of length $t-1$ on $t$ vertices. Given a colouring of a graph, we call the colouring with colours reversed the \emph{opposite} colouring. We will also often explore the neighbourhood of a vertex or a subset in a graph, and write $B_d(v)$ for the radius-$d$ ball around a vertex $v$ and write $B_d(X)$ for 
the set of all vertices of $G$ within distance at most $d$ from a subset $X$. We use $N^d(X)$ for the set of vertices at distance exactly $d$ from $X$, and abbreviate $N^1(X)$ as $N(X)$ (so $N(X)$ is the set of external neighbours of $X$ in $G$). Finally, all logarithms are to the base $e$.

\section{Proving the theorem} \label{sec:proof}

Given a large connected cubic graph, our goal is to colour the vertices such that the colour classes induce isomorphic subgraphs, and we shall find this colouring in two stages. In the first, we take a semi-random vertex colouring, and show that this is very close to having the desired properties. Then, in the second stage, we make some deterministic local recolourings to balance the two subgraphs and ensure they are truly isomorphic.

\subsection{A random colouring} \label{sec:random}

We begin our search for the isomorphic bisection by showing that one may leverage Thomassen's result to define a random colouring that will produce monochromatic subgraphs that are nearly isomorphic. The following proposition collects the properties of this initial colouring.

\begin{prop} \label{prop:random}
For any $d \in \mathbb{N}$ and any sufficiently large cubic graph $G$, there is a red-/blue-colouring $\varphi$ of the vertices for which the following hold.
\begin{itemize}
	\item[(a)] Each monochromatic component is a path of length at most five.
	\item[(b)] For each $2 \le t \le 6$, $\card{r_{P_t}(G,\varphi) - b_{P_t}(G,\varphi) } \le 3 \sqrt{n \log n}$.
	\item[(c)] There are sequences of vertices $(u_i)_{i \in [s]}$ and $(w_i)_{i \in [s]}$, for some $s \ge 2^{-2d-5}n$, such that all the balls $B_d(u_i)$ and $B_d(w_i)$ are pairwise disjoint, and for each $i \in [s]$, $B_d(u_i)$ and $B_d(w_i)$ induce isomorphic subgraphs with opposite colourings.
	\item[(d)] The colouring $\varphi$ is a bisection; that is, there are an equal number of red and blue vertices.
\end{itemize}
\end{prop}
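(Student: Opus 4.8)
The plan is to build $\varphi$ from Thomassen's theorem. Apply Theorem~\ref{thm:thomassen} to $G$ to partition $E(G)$ into two linear forests $F$ and $F'$, every component of which is a path on at most $6$ vertices. As $G$ is $3$-regular and each of $F,F'$ has maximum degree $2$, every vertex is incident with at least one edge of $F$ (and at least one of $F'$); in particular every vertex lies in exactly one path of $F$, so the paths of $F$ partition $V(G)$. Now colour the vertices of each path $P$ of $F$ by one of the two proper $2$-colourings of $P$ (which alternate along $P$), making these choices independently and uniformly at random; call the resulting vertex colouring $\varphi$. Since consecutive vertices on a path of $F$ receive opposite colours, every edge of $F$ is bichromatic under $\varphi$, so every monochromatic edge lies in $F'$. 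Consequently each monochromatic component of $\varphi$ is a connected subgraph of the linear forest $F'$, hence a path on at most $6$ vertices; this is exactly (a), and it holds however we choose the individual alternating colourings.

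For (b), fix $t$ and write $D_t=r_{P_t}(G,\varphi)-b_{P_t}(G,\varphi)$, a function of the $\Theta(n)$ independent choices, one per path of $F$. Re-choosing one alternating colouring recolours at most $6$ vertices, so changes each of $r_{P_t}$ and $b_{P_t}$, and hence $D_t$, by $O(1)$. Moreover flipping \emph{all} the choices replaces $\varphi$ by its opposite colouring; this is a measure-preserving involution of the probability space and it swaps $r_{P_t}$ with $b_{P_t}$, so $\E[D_t]=0$. McDiarmid's bounded-differences inequality now gives $\card{D_t}\le 3\sqrt{n\log n}$ with probability $1-o(1)$, and a union bound over $2\le t\le 6$ yields (b).

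For (c) we first argue structurally. To each vertex $v$ attach the isomorphism type of the rooted graph $G[B_{d+5}(v)]$ together with the set of edges of $F$ lying inside it; there are at most $C=C(d)$ such types. Choosing a matching within each type and taking the union over all types produces at least $\tfrac12(n-C)$ pairs of vertices $(u,w)$ admitting an isomorphism $\psi$ of these rooted marked structures, and in particular with $\psi(B_d(u))=B_d(w)$ preserving the restriction of $F$. Since each $B_d(v)$ has at most $2^{d+2}$ vertices and each $B_{2d}(v)$ has $O(4^d)$, a greedy selection — each chosen pair excludes only $O(4^d)$ further candidate centres — keeps $s\ge 2^{-2d-5}n$ of these pairs $(u_i,w_i)$ with all $2s$ balls $B_d(u_i),B_d(w_i)$ pairwise disjoint and far enough apart that no path of $F$ meets two of them. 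We now adjust the construction: for every path $P'$ of $F$ that meets some $B_d(w_i)$, instead of choosing its alternating colouring independently we take it to be the $\psi_i$-transport of the reversed colouring of the corresponding $F$-path $\psi_i^{-1}(P')$ near $B_d(u_i)$. Every colour $\varphi(v)$ is still determined by a single coin, either a free one or the negation of a free one, so the global colour-flip remains a measure-preserving involution; thus (a) is untouched, the $\E[D_t]=0$ computation and the bounded-differences bound for (b) go through unchanged (we chose $s$ small enough that only a small fraction of the $F$-paths are affected, leaving $\Theta(n)$ free coins), and by construction $\varphi$ restricted to $B_d(w_i)$ is, via $\psi_i$, the opposite of $\varphi$ restricted to $B_d(u_i)$ — which is (c).

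Finally (d). An alternating colouring of a path on an even number of vertices is balanced, while on a path with an odd number of vertices it has one more vertex of one colour, the majority colour being determined by which of the two alternating colourings is used. The pairs coupled in (c) contribute nothing to the red/blue imbalance of $\varphi$, so that imbalance is a signed sum of $\pm1$'s indexed by the odd-order paths of $F$ that are neither inside a ball $B_d(w_i)$ nor pinned on the $B_d(u_i)$-side. Since $n$ is even (as $G$ is cubic) and $\psi_i$ preserves path orders, the number of such paths is even, so we may fix those particular alternating colourings — pairing them into classes of opposite majority to retain $\Theta(n)$ free coins — so as to make the imbalance exactly $0$, giving (d). All four properties then hold simultaneously with probability $1-o(1)$. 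The one genuinely new idea here is the construction in the first paragraph: colouring alternately along one of Thomassen's two linear forests forces every monochromatic component into a short path of the \emph{other} forest. After that, parts (b)--(d) follow from fairly standard probabilistic and counting arguments, the only delicate point being to perform the local surgery needed for (c) without disturbing the colour-symmetry and parity on which (b) and (d) rely.
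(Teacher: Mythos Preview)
Your overall strategy is sound and shares the paper's key idea: use Thomassen's decomposition and colour alternately along one linear forest so that all monochromatic components live in the other. For (a) and (b) your argument is essentially the paper's. For (c) and (d), however, you diverge from the paper, and your treatment of (c) has a genuine gap.

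For (c) the paper does not couple anything. It greedily selects $\Theta(4^{-d}n)$ pairwise-disjoint $d$-balls, then applies McDiarmid a \emph{second} time to the counts of balls in each of the finitely many (isomorphism type, colouring) classes; since opposite colourings have equal expected counts, they can be matched \emph{after} the random colouring with only $O(\sqrt{n\log n})$ leftovers. This never needs to know how $F$ sits inside the balls. Your approach instead matches balls deterministically by their $(G,F)$-structure and then forces opposite colourings via coupling. The problem is that the data you record --- the isomorphism type of $G[B_{d+5}(v)]$ together with the $F$-edges lying inside it --- does \emph{not} determine the $F$-path decomposition near $v$. An $F$-path $P'$ through a vertex of $B_d(w_i)$ can have an endpoint on the sphere $N^{d+5}(w_i)$, and your isomorphism $\psi_i$ carries no information about whether the corresponding boundary vertex near $u_i$ has a further $F$-edge leaving $B_{d+5}(u_i)$. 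When it does, $\psi_i^{-1}(P')$ is only a proper subpath of an $F$-component, the phrase ``the corresponding $F$-path $\psi_i^{-1}(P')$'' is ill-defined, and the coupling breaks (and with it the claimed bijection between $F$-paths meeting $B_d(u_i)$ and those meeting $B_d(w_i)$, which you also use for (d)). This is repairable --- take radius $d+6$, or include the $F$-degrees of boundary vertices in the type --- but as written the step fails.

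Your handling of (d) is also more intricate than needed. The paper simply shows, via one more appeal to McDiarmid, that under the fully independent colouring the red/blue imbalance is $O(\sqrt{n\log n})$ with high probability, and then \emph{afterwards} swaps the colourings on that many even-length $F$-paths to reach an exact bisection; this perturbs the quantities in (b) and (c) by $O(\sqrt{n\log n})$, which is exactly why those bounds are stated with slack. Your scheme of pairing the remaining odd-order paths with opposite majorities can be made to work --- the global colour-flip still acts as a measure-preserving involution on the surviving coins, so $\E[D_t]=0$ persists --- but you leave the reader to reconstruct this, and the layered couplings make the Lipschitz bookkeeping for (b) messier than the paper's straightforward post-hoc fix.
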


In our proof, we will need to show that several random variables, each a function of the random colouring, lie close to their expected values. For this we employ McDiarmid's Inequality~\cite{McD89}, which bounds large deviations in random variables defined on product probability spaces, provided they do not vary too much in response to changes in individual coordinates.

\begin{theorem} \label{thm:mcdiarmid}
Let $X = (X_1, X_2, \hdots, X_n)$ be a family of independent random variables, with $X_k$ taking values in a set $A_k$ for each $k \in [n]$. Suppose further that there is some real-valued function $f$ defined on $\prod_{k \in [n]} A_k$ and some $c > 0$ such that $\card{f(x)-f(x')} \le c$ whenever $x$ and $x'$ differ only in a single coordinate. Then, for any $m \ge 0$,
\[ \mathbb{P} \left( \card{f(X) - \mathbb{E}[f(X)]} \ge m \right) \le 2 \exp \left( \frac{-2m^2}{c^2n} \right). \]
\end{theorem}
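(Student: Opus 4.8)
The plan is to prove this by the martingale (Azuma--Hoeffding) method: realise $f(X)$ as the terminal value of a Doob martingale whose increments are each confined to an interval of length at most $c$. Write $\mathcal{F}_k = \sigma(X_1, \dots, X_k)$ and set $Y_k = \E[f(X) \mid \mathcal{F}_k]$, so that $Y_0 = \E[f(X)]$ is deterministic and $Y_n = f(X)$. Because the coordinates are independent, $Y_k$ has the explicit ``freezing'' form $Y_k = g_k(X_1, \dots, X_k)$, where
\[ g_k(x_1, \dots, x_k) := \E\big[f(x_1, \dots, x_k, X_{k+1}, \dots, X_n)\big], \]
and likewise $Y_{k-1} = \E_{X_k'}\big[g_k(X_1, \dots, X_{k-1}, X_k')\big]$ for an independent copy $X_k'$ of $X_k$.

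The key step is to bound the increments $D_k := Y_k - Y_{k-1}$. Conditioning on the values of $X_1, \dots, X_{k-1}$, let $\ell_k$ and $u_k$ be the infimum and supremum of $y \mapsto g_k(X_1, \dots, X_{k-1}, y)$ over $y \in A_k$. Applying the bounded-differences hypothesis to inputs differing only in the $k$-th coordinate gives, pointwise in $x_{k+1}, \dots, x_n$ and hence in expectation, that $u_k - \ell_k \le c$. Since $g_k(X_1, \dots, X_k)$ and its conditional mean $Y_{k-1}$ both lie in $[\ell_k, u_k]$, conditioned on $\mathcal{F}_{k-1}$ the variable $D_k$ has mean $0$ and takes values in an interval of length at most $c$. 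By Hoeffding's lemma --- for a mean-zero random variable $Z$ confined to an interval of length $\ell$ one has $\E[e^{\lambda Z}] \le e^{\lambda^2 \ell^2 / 8}$, which follows by bounding $e^{\lambda z}$ above by its secant line across the interval and a two-term Taylor estimate of the resulting cumulant generating function --- we obtain $\E[e^{\lambda D_k} \mid \mathcal{F}_{k-1}] \le e^{\lambda^2 c^2/8}$ for every $\lambda \in \mathbb{R}$.

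It remains to run the standard Chernoff argument. Telescoping gives $f(X) - \E[f(X)] = \sum_{k=1}^n D_k$, and since each partial sum is $\mathcal{F}_k$-measurable, conditioning successively on $\mathcal{F}_{n-1}, \mathcal{F}_{n-2}, \dots, \mathcal{F}_0$ and applying the increment bound at each step yields $\E\big[e^{\lambda(f(X) - \E f(X))}\big] \le e^{n \lambda^2 c^2 / 8}$. By Markov's inequality, $\P\big(f(X) - \E f(X) \ge m\big) \le \exp(-\lambda m + n\lambda^2 c^2/8)$ for all $\lambda > 0$, and the choice $\lambda = 4m/(c^2 n)$ makes the exponent equal to $-2m^2/(c^2 n)$. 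Running the same argument with $f$ replaced by $-f$ and summing the two one-sided bounds gives the claimed two-sided inequality with the factor $2$.

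\textbf{Main obstacle.} The only delicate point is the increment bound: one must be sure that conditioning on $\mathcal{F}_{k-1}$ genuinely leaves only coordinates $k, \dots, n$ free --- this is exactly what independence of the $X_j$ provides, through the freezing formula for $g_k$ --- and that the supremum and infimum $\ell_k, u_k$ are bona fide random variables when the alphabets $A_k$ are uncountable; the latter is a routine measurable-selection remark, and in any event only a countable supremum is needed to control the exponential moment. Everything past that point --- Hoeffding's lemma, the telescoping, and the optimisation over $\lambda$ --- is routine.
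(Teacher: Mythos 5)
Theorem~\ref{thm:mcdiarmid} is not proved in the paper at all: it is quoted as a black box from McDiarmid's survey~\cite{McD89}, so there is no internal proof to compare against. Your argument is the standard (indeed, essentially McDiarmid's own) proof via the Doob martingale $Y_k=\E[f(X)\mid X_1,\dots,X_k]$, and it is correct: independence justifies the freezing formula for $g_k$, both $Y_k$ and $Y_{k-1}$ lie conditionally in the same interval $[\ell_k,u_k]$ of length at most $c$ so the increments are mean-zero and confined to an interval of length at most $c$, Hoeffding's lemma then gives $\E[e^{\lambda D_k}\mid\mathcal{F}_{k-1}]\le e^{\lambda^2c^2/8}$, and the Chernoff optimisation $\lambda=4m/(c^2n)$ produces the exponent $-2m^2/(c^2n)$, with the factor $2$ from combining the two one-sided bounds. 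Your closing remark on measurability is also the right disclaimer; nothing further is needed.
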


Equipped with this tool, we can now proceed with our proof.

\begin{proof}[Proof of Proposition~\ref{prop:random}]
To prove the proposition, we shall first define a random colouring $\varphi'$ that satisfies properties (a), (b) and (c) while being very close to a bisection. We shall then make a few small changes to obtain a bona fide bisection $\varphi$, maintaining the other properties in the process. 

To define $\varphi'$, we first apply Theorem~\ref{thm:thomassen} to our cubic graph $G$. This results in a partition of the edges of $G$ into two spanning linear forests, $F_1$ and $F_2$, neither of which contains a path of length six or more. We then take $\varphi'$ to be a uniformly random proper colouring of $F_1$. Observe that this is equivalent to selecting, independently for each path component of $F_1$, one of its two proper colourings uniformly. Thus, our probability space is a product space, with each coordinate corresponding to a colouring of a path component of $F_1$, of which there are at most $n$.

\medskip

We now verify in turn that the first three properties hold. For (a), observe that since $\varphi'$ is a proper colouring of $F_1$, the only monochromatic components are subgraphs of $F_2$. It thus follows immediately that each such component is a path of length at most five.

\medskip

To establish (b), we seek to apply Theorem~\ref{thm:mcdiarmid} to the random variable $r_{P_t}(G,\varphi')$. To do so, we must understand how changing the colouring of an individual path component of $F_1$ can affect $r_{P_t}(G, \varphi')$. Since the component in $F_2$ of each vertex is simply a path, changing the colouring of an individual vertex can affect at most two monochromatic copies of $P_t$. As each path component in $F_1$ is of length at most five, the colouring of each such path therefore affects at most $12$ copies of $P_t$. Hence, we can take $c = 12$ in Theorem~\ref{thm:mcdiarmid}. It then follows that, for each $m > 0$, $\mathbb{P}\left( \card{ r_{P_t}(G,\varphi') - \mathbb{E}[ r_{P_t}(G,\varphi') ] } \ge m \right) \le 2 \exp \left( - m^2 / (72n) \right)$. In particular, the probability that $r_{P_t}(G, \varphi')$ deviates from its expectation by more than $\sqrt{n \log n}$ is at most $2n^{-1/72} = o(1)$.

The same argument holds for $b_{P_t}(G,\varphi')$, and, by symmetry, we have $\mathbb{E}[r_{P_t}(G,\varphi')] = \mathbb{E}[b_{P_t}(G,\varphi')]$. Hence, the probability that $\card{r_{P_t}(G,\varphi') - b_{P_t}(G,\varphi')} \ge 2 \sqrt{n \log n}$ is $o(1)$. Taking a union bound over all choices of $2 \le t \le 6$, we find that with high probability we have $\card{r_{P_t}(G, \varphi') - b_{P_t}(G, \varphi')} \le 2 \sqrt{n \log n}$ for all $2 \le t \le 6$.

\medskip

We next turn our attention to (c). To start, we wish to select a sequence of vertices whose pairwise distances are all at least $2d+1$. Observe that, since $G$ is cubic, for each vertex $v$ we have $\card{B_{2d}(v)} \le 1 + \sum_{i=1}^{2d} 3 \cdot 2^i < 3 \cdot 2^{2d+1}$. We can then greedily select our desired vertices. Indeed, after we add a vertex $v_i$ to our sequence, we eliminate the vertices in $B_{2d}(v_i)$ from our consideration. This guarantees that we are now free to select any of the remaining vertices. This results in a sequence of vertices $v_1, v_2, \hdots, v_{s'}$ for some $s' \ge \tfrac13 2^{-2d-1} n$, with each pair at distance $2d+1$ or more. In particular, the balls $B_d(v_i)$ are pairwise disjoint.

Now consider the subgraphs induced by each ball $B_d(v_i)$. These are subcubic graphs on at most $3 \cdot 2^{d+1}$ vertices, and hence there are a finite number of possible isomorphism types. Furthermore, the subgraphs inherit a vertex-colouring from $\varphi'$. Since the number of vertices in the ball is bounded, there can be at most $2^{3 \cdot 2^{d+1}}$ different colourings. Therefore, considering the isomorphism type and colouring of each ball, there is some finite number $\kappa = \kappa(d)$ of different classes the balls can fall into.

For each $j \in [\kappa]$, we denote by $Y_j$ the random variable counting the number of balls $B_d(v_i)$, $i \in [s']$, that belong to the $j$th class. Note that for each $i$, the distribution of the colouring of $B_d(v_i)$ depends on how the paths of $F_1$ appear in the ball. However, by symmetry, opposite colourings of $B_d(v_i)$ appear with equal probabilities. It follows that if $j$ and $\bar{j}$ represent opposite colourings of isomorphic balls, then $\mathbb{E}[Y_j] = \mathbb{E}[Y_{\bar{j}}]$.

Since the balls are pairwise disjoint, changing the colour of a single vertex can affect at most one ball $B_d(v_i)$. Hence, changing the colouring of a path component of $F_1$ can affect at most six balls. Applying Theorem~\ref{thm:mcdiarmid} to the random variables $Y_j$, we can therefore take $c = 6$. Setting $m = \sqrt{n \log n}$, we find that with probability $1 - o(1)$ we have $\card{Y_j - \mathbb{E}[Y_j]} \le \sqrt{n \log n}$ for each $j \in [\kappa]$. It then follows that for opposite colourings $j$ and $\bar{j}$ we have $\card{Y_j - Y_{\bar{j}}} \le 2 \sqrt{n \log n}$. We can therefore match the balls $B_d(v_i)$ into isomorphic pairs with opposite colourings, with at most $2\sqrt{n \log n}$ unmatched balls for each of the $\kappa$ isomorphism types. Provided $n$ is suitably large, this is a total of at most $\tfrac12 s'$ unmatched balls, leaving us with at least $\tfrac14 s'$ matched pairs of balls.

\medskip

This shows that the random colouring $\varphi'$ is very likely to satisfy (a), (b) and (c). To finish the proof, we shall show that it is typically also close to being a bisection, and we can make it one without destroying the other properties.

Consider the random variable that is the difference between the numbers of red and blue vertices. Since every path component in $F_1$ of odd length contributes an equal number of red and blue vertices to the colouring, any discrepancy in the colour class sizes must come from the paths of even length. Furthermore, each such path contributes a difference in the colour class sizes of exactly one, with equal probability in either direction. Thus, the expected difference is zero, and recolouring a path component can affect the difference by at most two. We thus make a final appeal to Theorem~\ref{thm:mcdiarmid}, finding that with probability $1 - o(1)$, the difference in the two colour classes is of size at most $\tfrac{1}{20} \sqrt{n \log n}$.

\medskip

In summary, with positive probability it holds that the colouring $\varphi'$ satisfies properties (a), (b) and (c), and that the difference $\Delta$ between the number of red and blue vertices is at most $\tfrac{1}{20} \sqrt{n \log n}$. We can therefore obtain a bisection $\varphi$ by taking $\varphi'$ and replacing the colouring of $\Delta$ even paths in $F_1$ with their opposite colourings, thereby satisfying (d).

Note that $\varphi$ remains a proper colouring of $F_1$, and therefore property (a) is preserved. Moreover, we are changing the colours of at most $5 \Delta$ vertices. As previously discussed, changing the colour of a vertex can affect at most two monochromatic copies of $P_t$, and hence, for each $2 \le t \le 6$, the difference $\card{r_{P_t}(G,\varphi) - b_{P_t}(G,\varphi)}$ deviates from $\card{r_{P_t}(G,\varphi') - b_{P_t}(G, \varphi')}$ by at most $\sqrt{n \log n}$. In particular, we have $\card{r_{P_t}(G,\varphi) - b_{P_t}(G, \varphi)} \le 3 \sqrt{n \log n}$ for each $2 \le t \le 6$, establishing (b).

Similarly, under $\varphi'$ we had at least $\tfrac14 s' \ge \tfrac13 2^{-2d-3} n$ pairs of isomorphic balls with opposite colourings. Since these balls are pairwise disjoint, each recoloured vertex can affect at most one such pair. Hence, under $\varphi$ we still have at least $\tfrac14 s' - 5 \Delta$ such pairs which, if $n$ is large enough, is at least $2^{-2d - 5} n$ pairs. Thus, (c) holds for $\varphi$ as well, completing the proof.
\end{proof}

\subsection{Correcting the bisection} \label{sec:correction}

The colouring $\varphi$ from Proposition~\ref{prop:random} is close to being the bisection we need, but the number of short paths in the red and blue subgraphs can be a little off. In the second stage of our argument, we will make local changes to the colouring to correct these discrepancies. We do this via gadgets we call \emph{$P_t$-reducers}. These are flexible subgraphs, in the sense that they admit two different colourings which have the same monochromatic subgraph counts, except for short paths. By choosing an appropriate colouring, then, we can adjust the values of $r_{P_t}(G,\varphi)$ and $b_{P_t}(G, \varphi)$, making them equal.

When doing so, though, we do need to take care that our changes do not leak out and affect subgraph counts elsewhere. We thus insulate the $P_t$-reducer by colouring its boundary with alternating colours, thus preventing any monochromatic components from extending outwards. 

\begin{definition}[$P_t$-reducer] \label{def:reducer}
Given some $t \ge 3$, an induced subgraph $R \subseteq G$ is a \emph{$P_t$-reducer} if there are two vertex colourings $\psi_1, \psi_2$ of $B_2(R)=R\cup N(R)\cup N^2(R)$ such that:
\begin{itemize}
	\item[(i)] the two colourings have the same number of red (and therefore also blue) vertices,
	\item[(ii)] in both $\psi_1$ and $\psi_2$, all vertices of $N(R)$ are blue and all vertices of $N^2(R)$ are red, 
	\item[(iii)] $r_H(B_2(R), \psi_1) = r_H(B_2(R), \psi_2)$ and $b_H(B_2(R), \psi_1) = b_H(B_2(R), \psi_2)$, unless $H = P_{\ell}$ for some $2 \le \ell \le t$, and
	\item[(iv)] $r_{P_t}(B_2(R), \psi_2) = r_{P_t}(B_2(R), \psi_1) -1 $ and $b_{P_t}(B_2(R),\psi_1) = b_{P_t}(B_2(R), \psi_2)$.
\end{itemize}
\end{definition}

Of course, this definition is only useful if we can actually find $P_t$-reducers in our graph. Fortunately, they happen to be ubiquitous in cubic graphs.

\begin{prop}\label{prop:reducerexistence}
Let $G$ be a connected cubic graph on more than $3 \cdot 2^{50}$ vertices, and let $v \in V(G)$ be arbitrary. Then, for every $3 \le t \le 6$, there is a $P_t$-reducer in $B_{50}(v)$.
\end{prop}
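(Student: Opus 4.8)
The plan is to construct an explicit family of candidate reducers and argue that at least one of them must appear inside $B_{50}(v)$, using connectivity to control distances. First I would look for a concrete small subgraph $R$ together with its two colourings $\psi_1,\psi_2$ on $B_2(R)$. The natural candidate is a short path: take $R$ to be a path $P_k$ (on $k$ vertices) sitting inside $G$ in a controlled way, where $k$ is chosen depending on $t$. The two colourings $\psi_1,\psi_2$ should agree outside $R$, colouring $N(R)$ blue and $N^2(R)$ red to insulate (condition (ii)), and differ on $R$ so that one colouring contains exactly one more monochromatic $P_t$ than the other while keeping all longer monochromatic components (and all non-path components) identical (conditions (iii),(iv)); condition (i) just says the two colourings use the same number of reds, which one arranges by making the recolouring swap equal numbers of each colour on $R$. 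A clean way to get (iv) is to have $\psi_1$ restricted to $R$ consist of a red $P_t$ followed (after a blue separator vertex) by some fixed pattern, and $\psi_2$ the same pattern with that red $P_t$ broken up (e.g. shifted by one), so the only change in component counts is the loss of one red $P_t$ and a compensating rearrangement among paths of length at most $t$.

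Second, I would address \emph{finding} such a configuration in $B_{50}(v)$. Since $G$ is connected, cubic, and has more than $3\cdot 2^{50}$ vertices, the ball $B_{50}(v)$ cannot be all of $G$: we have $\card{B_{50}(v)} \le 1 + \sum_{i=1}^{50} 3\cdot 2^i < 3\cdot 2^{51}$, but more importantly $B_{49}(v)\subsetneq G$, so there is a vertex at distance exactly $50$ from $v$ and hence $B_{50}(v)$ contains a long induced path emanating from somewhere, or at least a rich enough local structure to host the gadget. The key point is that within distance $50$ of $v$ there are many vertices, and among cubic graphs on that many vertices one can always locate either a long induced path or a short cycle or a specific small subgraph; in any of these cases one can build a $P_t$-reducer for every $3\le t\le 6$. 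So I would do a short case analysis on the local structure near $v$: (i) if there is an induced path of length, say, $\ge 20$ inside $B_{50}(v)$, use the path-based gadget above; (ii) otherwise the girth is small or degrees force a cycle nearby, and a short even cycle (or a pair of adjacent triangles, etc.) inside $B_{50}(v)$ furnishes an alternative reducer whose two colourings one writes down by hand.

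The main obstacle I expect is verifying condition (iii) — that switching $\psi_1$ to $\psi_2$ changes \emph{only} the counts of monochromatic paths of length at most $t$, and nothing else (no stray long path created or destroyed, no non-path component altered). This requires the recolouring on $R$ to be genuinely local: the insulating blue layer $N(R)$ and red layer $N^2(R)$ must actually cut off every monochromatic component that touches $R$, so one has to check that no vertex of $R$ has a neighbour outside $B_2(R)$ that could carry a component further — i.e. that $R$ together with its boundary behaves like a detached gadget. Establishing that the boundary colouring in (ii) is consistent with $G$ being cubic (each boundary vertex has the right number of same-coloured neighbours to not spawn unexpected components) is the fiddly part, and the case analysis in the previous paragraph is precisely designed so that in each case $R$ can be chosen with a "clean" neighbourhood. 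Once the gadget and its colourings are pinned down, conditions (i), (ii), (iv) are immediate from the explicit construction, and (iii) follows by inspecting the bounded-size neighbourhood $B_2(R)$ directly.
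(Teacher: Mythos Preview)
Your outline captures the right shape of the argument---a path-based gadget with the $N(R)$ blue, $N^2(R)$ red insulation, and two colourings that differ by sliding a blue separator along the path---but it glosses over the step that carries all the weight. When you move the blue vertex from position $t$ to position $t-1$ on $R$, you correctly track what happens to the \emph{red} subpaths of $R$. What you do not address is what happens to the \emph{blue} component containing that separator vertex: the blue vertex on $R$ has an external neighbour in $N(R)$, and unless you know something about the structure of $N(R)$, the blue component through the old separator and the blue component through the new one need not be isomorphic. If they differ, condition (iii) fails for some $H$. In the large-girth case the paper dispatches this with a one-line lemma showing $N(P)$ is independent for a geodesic $P$, so both blue components are single edges; your proposal never isolates or proves an analogue of this, and without it the ``shift by one'' gadget simply does not work.

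Your case split (long induced path versus short cycle nearby) does not match what is actually needed and would not close the gap. A long induced path by itself is not enough: the paper always has a long geodesic available, and the entire difficulty in the general case is that the external neighbours $r_i$ of the path may coincide or be adjacent in uncontrolled ways. The paper's route is to first reduce (via a separate lemma) to the situation where no outside vertex has two neighbours on the geodesic, then show that one can either find the desired $P_t$-reducer directly or an \emph{unbalanced} variant where $\psi_1$ has one more red vertex than $\psi_2$; two such unbalanced gadgets for $t$ and $t-1$, placed in disjoint balls with one of them colour-reversed, are then glued to restore condition (i). The $t=3$ case needs yet another bespoke case analysis. None of this machinery---the reduction lemma, the unbalanced reducers, the gluing trick, or the separate treatment of $t=3$---appears in your plan, and the sentence ``a short even cycle \ldots\ furnishes an alternative reducer whose two colourings one writes down by hand'' is not a substitute: it is exactly here that the verification of (iii) becomes delicate, and no concrete gadget is exhibited.
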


We shall prove Proposition~\ref{prop:reducerexistence} in Section~\ref{sec:reducers}, but first we show how one can use $P_t$-reducers to obtain the desired isomorphic bisection.

\begin{proof}[Proof of Theorem~\ref{thm:main}]
Given a sufficiently large connected cubic graph $G$, set $d = 57$ and let $\varphi_0$ be the bisection given by Proposition~\ref{prop:random} with this $d$. We then have $r_H(G,\varphi_0) = b_H(G,\varphi_0) = 0$ for all $H$ except $H = P_t$, $2 \le t \le 6$. For these paths, we have $\card{r_{P_t}(G,\varphi_0) - b_{P_t}(G,\varphi_0)} \le 3\sqrt{n \log n}$, and we shall correct these imbalances one at a time, in decreasing order of path length.

\medskip

We start with $t = 6$. Suppose, without loss of generality, that we have $r_{P_6}(G,\varphi_0) > b_{P_6}(G,\varphi_0)$. Take the first pair of isomorphic and oppositely-coloured balls, $B_{57}(u_1)$ and $B_{57}(w_1)$. By Proposition~\ref{prop:reducerexistence}, we can find some $P_6$-reducer $R \subseteq G[B_{50}(u_1)]$, and therefore we find a corresponding oppositely-coloured copy $\bar{R} \subseteq G[B_{50}(w_1)]$ as well. We then recolour $B_2(R)$ with $\psi_2$, and colour $B_2(\bar{R})$ with $\bar{\psi}_1$, the opposite colouring of $\psi_1$.

Let $\varphi_1$ be the resulting colouring. Note that this is still a bisection, since $\psi_1$ and $\psi_2$ have the same number of red vertices, and we have made symmetric changes in $B_2(R)$ and $B_2(\bar{R})$. We next claim that $r_H(G,\varphi_1) = b_H(G,\varphi_1)$ for all graphs except $P_t$, $2 \le t \le 6$, and that $r_{P_6}(G,\varphi_1) - b_{P_6}(G,\varphi_1) = r_{P_6}(G,\varphi_0) - b_{P_6}(G, \varphi_0) - 1$; that is, the difference in monochromatic copies of $P_6$ is reduced by one.

Observe that we only need to concern ourselves with monochromatic components contained within $B_{57}(u_1)$ and $B_{57}(w_1)$. Indeed, in $\varphi_0$, all monochromatic components were paths of length at most five. As we only recoloured vertices in $B_2(R)$ and $B_2(\bar{R})$, which are contained in $B_{50}(u_1)$ and $B_{50}(w_1)$ respectively, any affected components do not reach outside the original balls $B_{57}(u_1)$ and $B_{57}(w_1)$. Also note that, since these balls had the opposite colourings, before the recolouring there was a one-to-one correspondence between red components in the first ball and blue components in the second ball and vice versa.

First we consider components not fully contained within $B_2(R)$ or $B_2(\bar{R})$. Recall that in $\psi_2$, the vertices in $N(R)$ and $N^2(R)$ receive opposite colours, with the same being true of the colouring $\bar{\psi}_1$ of $B_2(\bar{R})$. Thus, any such component in $B_{57}(u_1)$ can only contain (red) vertices from $N^2(R)$, together with some vertices in $B_{57}(u_1) \setminus B_2(R)$. However, since $B_{57}(u_1)$ and $B_{57}(w_1)$ are isomorphic and oppositely-coloured, these components are in bijection with isomorphic blue components in $B_{57}(w_1)$, and hence no additional discrepancy is introduced through these components.

This leaves us with components fully within $B_2(R)$ and $B_2(\bar{R})$, where the properties of the $P_6$-reducer come into play. For any component $H$ that is not a path of length at most six, we have $r_H(B_2(R),\psi_2) = r_H(B_2(R),\psi_1) = b_H(B_2(\bar{R}), \bar{\psi}_1)$, 
and similarly $b_H(B_2(R),\psi_2) = r_H(B_2(\bar{R}),\bar{\psi}_1)$. Thus the monochromatic copies of these components remain balanced. As for paths of length six, we have $$r_{P_6}(B_2(R),\psi_2) = r_{P_6}(B_2(R),\psi_1) - 1 = b_{P_6}(B_2(\bar{R}),\bar{\psi}_1) - 1,$$ while $b_{P_6}(B_2(R), \psi_2) = r_{P_6}(B_2(\bar{R}),\bar{\psi}_1)$, and so the difference between red and blue copies of $P_6$ is indeed reduced by one in $\varphi_1$.

\medskip

Although this recolouring could create monochromatic components that are not paths of length at most six, these must be fully contained within the balls $B_{57}(u_1)$ and $B_{57}(w_1)$, which are disjoint from all the other balls. We can thus repeat this process a further $r_{P_6}(G, \varphi_1) - b_{P_6}(G, \varphi_1) - 1$ times, using the next pairs of balls $B_{57}(u_i)$ and $B_{57}(w_i)$ in the sequence. This gives us a sequence of colourings, $\varphi_1, \varphi_2, \hdots, \varphi_k$, the last of which will satisfy $r_{P_6}(G, \varphi_k) = b_{P_6}(G, \varphi_k)$.

Furthermore, since each $P_6$-reducer is of constant size, every step of the process can only have created a constant number of monochromatic $P_5$-components. Thus, we still have $\card{r_{P_5}(G,\varphi_k) - r_{P_5}(G, \varphi_k)} = O \left( \sqrt{n \log n} \right)$. We can therefore use Proposition~\ref{prop:reducerexistence} to find $P_5$-reducers in the next $O \left( \sqrt{n \log n} \right)$ pairs of balls and balance the monochromatic $P_5$ counts. Once those are handled, we proceed to fixing the $P_4$ counts, and then finally the $P_3$ counts. Observe that we require a total of $O \left( \sqrt{n \log n} \right)$ steps, and since Proposition~\ref{prop:random} guarantees us $\Omega(n)$ pairs of isomorphic balls, we can see this process through to completion.

Let $\varphi$ be the final colouring obtained. Following our corrections with the $P_t$-reducers, we know that for every $H \neq P_2$, we have $r_H(G,\varphi) = b_H(G, \varphi)$. However, since $G$ is cubic and $\varphi$ is a bisection, double-counting the edges between the colour classes shows that the total number of monochromatic red edges must equal the number of blue edges. It therefore follows that $r_{P_2}(G,\varphi) = b_{P_2}(G, \varphi)$ as well, and hence the subgraphs induced by the red and the blue vertices are isomorphic.
\end{proof}

\section{Constructing $P_t$-reducers} \label{sec:reducers}

To complete the proof of Theorem~\ref{thm:main}, we need to prove Proposition~\ref{prop:reducerexistence}, showing that we can find $P_t$-reducers in the local neighbourhoods of every vertex. We will first show a very simple construction that works in graphs of girth at least seven, thereby providing a short proof of Conjecture~\ref{conj:ando} for large cubic graphs without short cycles. The proof in the general case is a little more involved, requiring analysis of a few different cases, and can be found in Section~\ref{sec:general}.

\subsection{Graphs of girth at least seven} \label{sec:largegirth}

Our construction is based around geodesics --- that is, shortest paths in the graph between a pair of vertices. We start with a lemma about neighbourhoods of geodesics.

\begin{lemma} \label{lem:geonhd}
Let $G$ be a cubic graph of girth at least seven, and let $P$ be a geodesic in $G$. Then $N(P)$, the set of external neighbours of vertices on $P$, is an independent set.
\end{lemma}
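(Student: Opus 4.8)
The plan is to argue by contradiction: suppose two vertices $x, y \in N(P)$ are adjacent, and derive a short cycle, contradicting the girth assumption. Write $P = p_0 p_1 \cdots p_k$ for the geodesic. Each of $x$ and $y$ has at least one neighbour on $P$; pick $p_i$ a neighbour of $x$ and $p_j$ a neighbour of $y$, and assume without loss of generality that $i \le j$. The edge $xy$, together with the edges $x p_i$ and $y p_j$ and the sub-path of $P$ from $p_i$ to $p_j$, forms a closed walk of length $(j - i) + 3$. So if $j - i \le 3$ we immediately get a cycle of length at most $6$ (a genuine cycle provided $x, y \notin P$ and $x \neq y$, which needs a few words), contradicting girth $\ge 7$.

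It therefore remains to rule out $j - i \ge 4$. Here I would use the defining property of a geodesic: $P$ realises the distance between its endpoints, so in particular $d_G(p_i, p_j) = j - i$. But the walk $p_i\, x\, y\, p_j$ has length $3$, giving $d_G(p_i, p_j) \le 3 < 4 \le j - i$, a contradiction. Hence no such adjacent pair exists, and $N(P)$ is independent. A couple of degenerate configurations should be addressed explicitly: if $x = y$ then $x$ has two neighbours $p_i, p_j$ on $P$ with $j - i \ge 2$ (since $P$ has no chords, again because it is a geodesic) creating a cycle of length $(j-i)+2 \le $ something short unless handled — but actually $x \notin P$ so $x p_i p_{i+1} \cdots p_j x$ is a cycle of length $(j-i)+2$, and $d_G(p_i,p_j)\le 2$ forces $j - i \le 2$, so this is a cycle of length at most $4$, again contradicting girth $\ge 7$; so in fact distinct vertices of $N(P)$ even fail to coincide in a problematic way, and more simply each vertex of $N(P)$ has exactly one neighbour on $P$. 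I should also note $x \neq p_j$ and $y \neq p_i$: a vertex of $N(P)$ lies outside $P$ by definition of external neighbour, so all the closed walks above are honest cycles.

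The main (and essentially only) obstacle is bookkeeping the small cases so that the closed walks are genuinely cycles rather than walks that backtrack — i.e. verifying that the vertices $x, y, p_i, p_j$ are distinct and that the path $p_i \cdots p_j$ shares no vertex with $\{x,y\}$. All of this follows from two facts: vertices of $N(P)$ are not on $P$, and $P$ is an induced path with no shortcuts (a consequence of being a geodesic). Once those are in place, every case reduces to exhibiting a cycle of length at most $6$, which is forbidden. I expect the whole argument to be short; the geodesic property is doing all the heavy lifting, turning "two neighbours of $P$ are close along $P$" into "they are far along $P$," which then contradicts the length-$3$ detour through $x$ and $y$.
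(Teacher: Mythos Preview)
Your approach is correct and essentially identical to the paper's: assume $x,y\in N(P)$ are adjacent with neighbours $p_i,p_j$ on $P$, obtain a cycle of length at most $6$ when $j-i\le 3$, and contradict the geodesic property via the length-$3$ detour $p_i x y p_j$ when $j-i\ge 4$. The extra bookkeeping you propose (distinctness of $x,y,p_i,p_j$; the $x=y$ digression) is harmless but unnecessary, and the paper omits it.
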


\begin{proof}
Let $u, w \in N(P)$ be arbitrary vertices in the neighbourhood of $P$, and suppose for contradiction we have an edge $\{u,w\}$. Let $u', w' \in P$ be neighbours of $u$ and $w$ respectively. If $u'$ and $w'$ are at distance at most three along the path $P$, then, together with the edges $\{u', u\}, \{u,w\}$ and $\{w,w'\}$, we would obtain a cycle of length at most six, contradicting our assumption on the girth of $G$.

On the other hand, if $u'$ and $w'$ are at distance at least four on the path $P$, then we could shorten the path by rerouting it between $u'$ and $w'$ through $u$ and $w$ instead. This contradicts $P$ being a geodesic.

Hence $u$ and $w$ cannot be adjacent, and thus it follows that $N(P)$ is an independent set.
\end{proof}

This shows that the structure around a geodesic is particularly simple. As a result, we can easily find $P_t$-reducers, as we now show.

\begin{proof}[Proof of Proposition~\ref{prop:reducerexistence} (girth $\ge 7$)]
Since there are at most $3 \cdot 2^{t+1}<3 \cdot 2^{50}$ vertices at distance at most $t$ from the vertex $v$, there must be a vertex $w$ at distance exactly $t+1$ from $v$. Let $P$ be a geodesic from $v$ to $w$, with vertices $v = v_0, v_1, v_2, \hdots, v_t, v_{t+1} = w$.  We will show that $P$ is a $P_t$-reducer.

We now define the colourings $\psi_1$ and $\psi_2$ of $B_2(P)=P\cup N(P)\cup N^2(P)$. In both colourings, we colour all vertices in $N^2(P)$ red and all vertices in $N(P)$ blue. When colouring the path $P$, we make all vertices red, except in $\psi_1$ the vertex $v_t$ is coloured blue, while in $\psi_2$ the vertex $v_{t-1}$ is coloured blue instead.
\begin{figure}[h]
  \centering
    \includegraphics[width=0.7\textwidth]{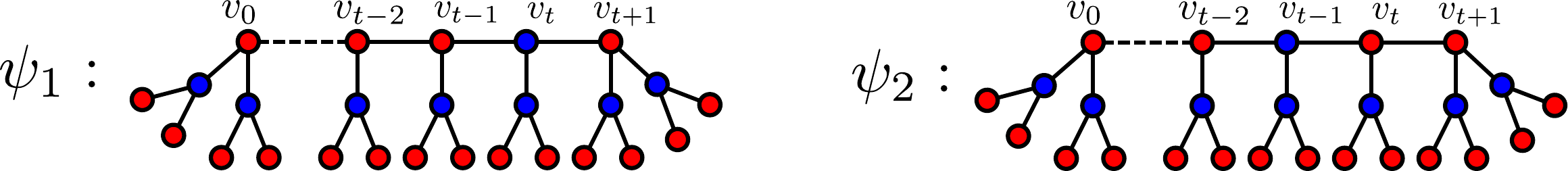}
\end{figure}

Clearly both $\psi_1$ and $\psi_2$ have the same number of red vertices, and thus property (i) is satisfied.  Property (ii) is satisfied by the definition of the colouring on $N(P)$ and $N^2(P)$.

Finally, we inspect the monochromatic components of $B_2(P)$ under $\psi_1$ and $\psi_2$ to verify properties (iii) and (iv). Let us start with the blue components. In the two colourings, the blue vertices are those in $N(P)$, together with one of $v_{t-1}$ or $v_t$. By Lemma~\ref{lem:geonhd}, the vertices in $N(P)$ form an independent set. As $v_{t-1}$ and $v_t$ are internal vertices on the path $P$, they have exactly one neighbour in $N(P)$. Thus in both $\psi_1$ and $\psi_2$, the blue vertices induce one isolated edge and $\card{N(P)} - 1$ isolated vertices, and thus $b_H(B_2(P),\psi_1) = b_H(B_2(P), \psi_2)$ for every $H$.

Since $N(P)$ is entirely blue, the red components of $B_2(P)$ are either wholly contained in $N^2(P)$ or in $P$. In the former case, since all vertices in $N^2(P)$ are red, both colourings have the same components. In the latter case, the path $P$ is broken into two red paths by the sole blue vertex, which is $v_t$ in $\psi_1$ and $v_{t-1}$ in $\psi_2$. Thus $\psi_1$ has a red $P_t$ and a red $P_1$, while $\psi_2$ has a red $P_{t-1}$ and a red $P_2$. This shows that (iii) and (iv) are indeed satisfied, and so the colourings $\psi_1$ and $\psi_2$ bear witness to $P$ being a $P_t$-reducer.
\end{proof}

\subsection{The general case} \label{sec:general}

In general, we cannot expect such simple structure around geodesic paths. However, induced paths will still play a key role in our construction of $P_t$-reducers, as shown by the following lemma.

\begin{lemma}\label{Lemma_2_edges_on_path}
In a cubic graph $G$, let $Q$ be an induced path of length $t+1$ or $t+2$ with vertices labelled as $Q=(u, x,z, q_{1}, \dots, q_{t-1})$ or $Q=(u, x,y, z, q_{1}, \dots, q_{t-1})$ respectively. Let $v$ be a vertex outside $Q$ with the edges $vx$ and $vz$ present. Then $B_{3}(Q)$ contains a $P_t$-reducer.
\end{lemma}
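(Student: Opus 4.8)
The plan is to imitate the high-girth construction, but to work with the induced path $Q$ rather than a geodesic, and to exploit the vertex $v$ of degree (at least) two into $Q$ as a "cap" that lets us close off the reducer cleanly. We take the $P_t$-reducer to be the sub-path $R = (z, q_1, \dots, q_{t-1})$ of $Q$ (which has exactly $t$ vertices), and we build the two colourings $\psi_1, \psi_2$ of $B_2(R)$ by following the template from the girth-$7$ case: colour all of $R$ red except for one distinguished vertex, which will be $q_{t-1}$ in $\psi_1$ and $q_{t-2}$ in $\psi_2$ (relabelling as needed so that breaking $R$ at this vertex splits it into a $P_t{+}P_1$ versus a $P_{t-1}{+}P_2$). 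The key point is that the interface $N(R)$ and $N^2(R)$ can be two-coloured so that (a) $N(R)$ is monochromatic blue and $N^2(R)$ is monochromatic red, satisfying (ii), and (b) all monochromatic components meeting $N(R)\cup N^2(R)$ are identical in $\psi_1$ and $\psi_2$, so that the only change between the two colourings is the splitting of the red path $R$. If that can be arranged, properties (i), (iii), (iv) follow exactly as before.

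The structural input we need is an understanding of $N(R)$. Because $Q$ is an \emph{induced} path, the only adjacencies among $V(Q)$ are the path edges, so the vertices $q_i$ each have one external neighbour (the interior ones) or two ($q_{t-1}$), and $z$ has exactly one external neighbour not already on $Q$ — here is where $v$ enters, since $v$ is a common neighbour of $x$ and $z$, meaning $z$'s third neighbour is $v$, which lies just outside $R$ (it is at distance $1$ or $2$ from $R$ depending on the case, via $x$). The worry, unlike in the girth-$7$ setting, is that $N(R)$ need not be independent: two external neighbours $q_i', q_j'$ could be adjacent, or an external neighbour could coincide with $v$ or with a vertex of $Q\setminus R$. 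I would handle this by a short case analysis showing that such coincidences and adjacencies, when they occur, still leave $N(R)$ inducing a graph of maximum degree at most $1$ together with possibly the single vertex $v$; short cycles through $R$ are limited because $Q$ is induced and of length $\ge t+1$, which bounds how close along $R$ two "chord endpoints" can attach before forcing a chord of the induced path $Q$ (a contradiction) — this is the analogue of the girth argument in Lemma~\ref{lem:geonhd}, but localised using inducedness rather than a global girth hypothesis. The role of $v$ is precisely to absorb the one place ($z$, the endpoint of $R$ adjacent to the rest of $Q$) where the clean "each vertex of $R$ has $\le 1$ private external neighbour" picture could fail: $v$ is coloured consistently (say red, as part of the $N^2$-type layer reached through $x$) in both colourings, so it never contributes a discrepancy.

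Having pinned down the local structure, the verification is routine and mirrors the girth-$7$ proof: the blue vertices are $N(R)$ plus one interior vertex of $R$ (either $q_{t-1}$ or $q_{t-2}$), which has exactly one neighbour in $N(R)$, so in both colourings the blue graph on $B_2(R)$ is a single edge plus isolated vertices — identical in $\psi_1$ and $\psi_2$, giving $b_H(B_2(R),\psi_1)=b_H(B_2(R),\psi_2)$ for all $H$, which is even stronger than (iv) requires. The red components are either contained in $N^2(R)\cup\{v\}$ (unchanged between the two colourings) or contained in $R$, where $R$ is split by the lone blue vertex into $P_t\cup P_1$ under $\psi_1$ and $P_{t-1}\cup P_2$ under $\psi_2$; this yields (iii) and (iv). One must also check $B_2(R)\subseteq B_3(Q)$, which holds because $R\subseteq Q$ and we only go out two steps. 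Finally, property (i) holds since $|R|$ is fixed and we recolour the same number of vertices of $R$ in each case, and $v$ and the interface layers get the same colours in $\psi_1$ and $\psi_2$.

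\textbf{Main obstacle.} The genuine difficulty is the case analysis controlling $N(R)$: ruling out (or safely accommodating) adjacencies within $N(R)$, coincidences between $N(R)$ and $V(Q)\setminus V(R)$ or the vertex $v$, and the possibility that the distinguished near-endpoint of $R$ (whose external neighbour must be a single vertex for the "isolated edge" count to work) behaves badly. Everything hinges on leveraging that $Q$ is \emph{induced} and \emph{long} (length $\ge t+1$, so $\ge t-1$ vertices of $R$ plus the guaranteed witness $v$) to forbid the short chords that would otherwise spoil the component count; getting the bookkeeping of these forbidden configurations exactly right — and confirming that in every surviving configuration the blue graph is still just one edge plus isolated vertices regardless of which of $q_{t-1}, q_{t-2}$ is blue — is where the real work lies.
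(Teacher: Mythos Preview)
Your proposal has genuine gaps, both in the details and in the overall strategy.

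On the details: your $R = (z, q_1, \dots, q_{t-1})$ has only $t$ vertices, so removing one of them leaves $t-1$ red vertices, which cannot split as $P_t \cup P_1$; the parenthetical ``relabelling as needed'' does not fix an arithmetic shortfall. Moreover, $q_{t-1}$ is an endpoint of the induced path $Q$, hence has \emph{two} neighbours in $N(R)$, so when $q_{t-1}$ is blue the blue subgraph is not ``one edge plus isolated vertices'' as you claim. Finally, since $vz$ is an edge and $z \in R$, the vertex $v$ lies in $N(R)$, not in $N^2(R)$; it must be blue under (ii), contrary to your plan to colour it red.

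More importantly, you are missing the idea that makes the lemma clean. You try to run the girth-$7$ argument on a subpath and then patch up the lack of control on $N(R)$ by a case analysis you yourself flag as the ``main obstacle'' and leave undone. The paper bypasses this entirely by taking $R = Q \cup \{v\}$. The point of the hypothesis is that once $v$ is inside $R$, both $x$ and $z$ have all three of their neighbours in $R$ (their two path-neighbours on $Q$ plus $v$), so neither touches $N(R)$. One then keeps $v$ blue in both colourings and toggles which of $x, z$ is blue (with $y$ also blue in the length-$(t+2)$ case). The only blue component that changes is the one through $v$, and the $x \leftrightarrow z$ symmetry --- each is a degree-$3$ vertex with no neighbours in $N(R)$ --- forces that component to be isomorphic in $\psi_1$ and $\psi_2$, with no assumption whatsoever on the structure of $N(R)$. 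The red side then splits $Q$ into $P_t \cup P_1$ versus $P_{t-1} \cup P_2$ exactly as in the high-girth case. Your case analysis is thus neither completed nor needed; the role of $v$ is not to sit passively in a boundary layer but to be absorbed into $R$ so that $x$ and $z$ become interchangeable.
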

\begin{proof}
Depending on the length of $Q$, define $\psi_1$ and $\psi_2$ on $R:=Q\cup \{v\}$ as follows: 
\begin{figure}[h]
  \centering
    \includegraphics[width=0.7\textwidth]{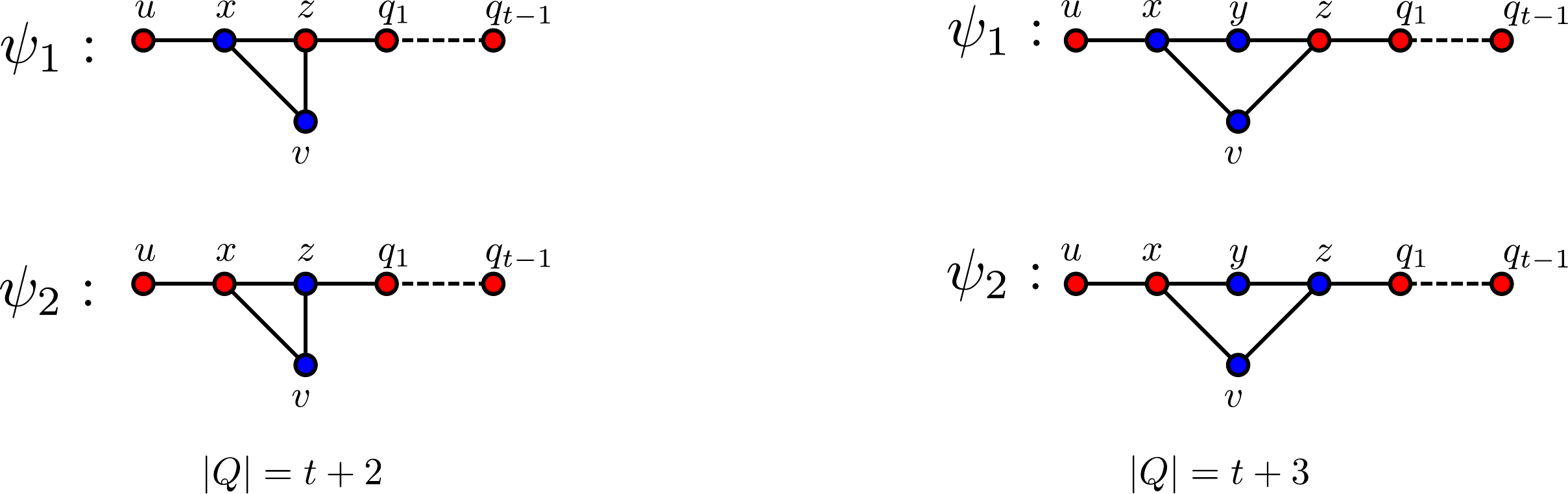}
\label{Figure_induced_path_lemma}
\end{figure}

Define both $\psi_1$ and $\psi_2$ to be blue on $N(R)$ and red on  $N^2(R)$. We claim that this makes $R$ a $P_t$-reducer. Properties (i) and (ii) are immediate by construction. Notice that $b_{H}(B_2(R),\psi_1)=b_{H}(B_2(R),\psi_2)$ for all $H$. Indeed the only blue component that changes between the two colourings is the one that contains $v$. However, since the graph is cubic, neither $x$ nor $z$ has any neighbours in $N(R)$, and therefore
it is easy to see that this component is isomorphic between the two colourings. 
Finally notice that the only red components that change between the two colourings are the two red subpaths of $Q$. 
The colouring $\psi_1$ has a red $P_t$ and a red $P_1$, while $\psi_2$ has a red $P_{t-1}$ and a red $P_2$.
We also have $r_{H}(B_2(R),\psi_1)=r_{H}(B_2(R),\psi_2)$ for all other $H$. This completes the proof of (iii) and (iv).
\end{proof}
Recall that a geodesic path $Q$ is a shortest length path between its endpoints, and note that a length $14$ geodesic path exists in $B_{14}(v)$ for every vertex $v$ in a large connected cubic graph.
An important consequence of the above lemma is that it reduces the problem to the case when every vertex outside a geodesic path $Q$ has at most one neighbour on $Q$. Using this we can prove the following lemma.  
\begin{lemma}\label{Lemma_unbalanced_reducer}
Let $Q$ be a geodesic path of length $14$ in a cubic graph $G$. Then $B_3(Q)$ either contains $P_t$-reducers for all  $3\leq t\leq 6$ or it contains, for each $3 \le t \le 6$, an induced subgraph $R$ which has 
two colourings $\psi_1$ and $\psi_2$ of $B_2(R)$ that satisfy (ii), (iii) and (iv) of the definition of $P_{t}$-reducer and also:
\begin{itemize}
    \item [(i')] $\psi_1$ has one more red vertex than $\psi_2$.
\end{itemize}
\end{lemma}
\begin{proof}
Let $Q=(q_1, \dots, q_{15})$.
For each $2 \le i \le 14$, let $r_i$ be the   neighbour of $q_i$ outside $Q$. These are unique because the graph is cubic. If $r_i=r_j$ for some $2 \le i< j \le 14$, the fact that $Q$ is a geodesic path forces $j=i+ 1$ or $j=i+ 2$. In either case, we find $P_t$-reducers for all $3\leq t\leq 6$ using Lemma~\ref{Lemma_2_edges_on_path}. Thus we can assume that $r_2, \dots, r_{14}$ are all distinct.

Next, suppose that $r_ir_{i+1}$ is an edge for  all $3 \leq i\leq 10$. Since $G$ is cubic, this determines all the neighbours of $r_i$ for $4 \le i \le 10$. It follows that for $t\in \{3,4,5,6\}$, the path $Q'=(q_2, q_3, q_4, r_4, r_5, \dots, r_{t+4})$ is induced, has length $t+3$, and the vertex $v:=r_3$ has neighbours $q_3$ and $r_4$. Thus Lemma~\ref{Lemma_2_edges_on_path} applies to give us $P_t$-reducers for all  $3 \le t \le 6$.

Hence, we may assume that $r_ir_{i+1}$ is a non-edge for  some $3\leq i\leq 10$. Fix $t\in \{3,4,5,6\}$.  Define the subpath $Q'=(q_{i-1}, \dots, q_{i+t-1})$
Define $\psi_1$ and $\psi_2$ on $R:=Q'\cup \{r_i, r_{i+1}\}$ as follows: 
\begin{figure}[h]
  \centering
    \includegraphics[width=0.7\textwidth]{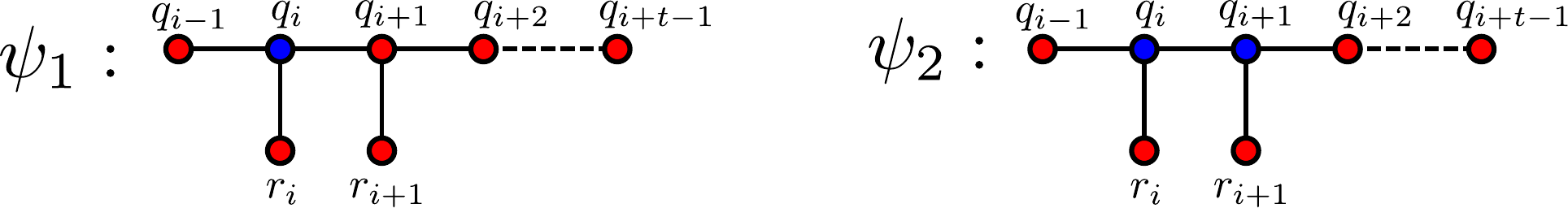}
\end{figure}

Extend both $\psi_1$, and $\psi_2$ to be blue on $N(R)$ and red on  $N^2(R)$. We claim that then these colourings of $B_2(R)$ satisfy (i'), (ii), (iii), and (iv). 
Properties (i') and (ii) are immediate by construction. For (iii) and (iv), notice that the only components that change between the two colourings are the blue component containing $q_i$ (which, by virtue of the fact that all neighbours of $q_i$ and $q_{i+1}$ lie within $R$, is a $P_1$ in $\psi_1$ and a $P_2$ in $\psi_2$) and the red component containing $q_{i+2}$ (which is a $P_t$ in $\psi_1$ and splits into a $P_1$ and a $P_{t-2}$ in $\psi_2$).
\end{proof}

The above lemma implies  Proposition~\ref{prop:reducerexistence} for $t\geq 4$.
\begin{proof}[Proof of Proposition~\ref{prop:reducerexistence} for $t\geq 4$]
Note that within $B_{45}(v)$, we can find two geodesic paths of length $14$, which we call $Q$ and $R$, at distance $10$ from each other. Thus the balls $B_3(Q)$ and $B_3(R)$ are disjoint. If either of these balls contains a $P_t$-reducer, we are done. Otherwise, by Lemma~\ref{Lemma_unbalanced_reducer}, we can assume the existence of some $S_1 \subseteq B_3(Q)$ with colourings $\psi_1^{S_1,t}$ and $\psi_2^{S_1,t}$ of $B_2(S_1)$ satisfying (ii), (iii), (iv) of the definition of a $P_t$-reducer and also (i') from Lemma~\ref{Lemma_unbalanced_reducer}. Similarly, we find $S_2 \subseteq B_3(R)$ with colourings $\psi_1^{S_2,t-1}, \psi_2^{S_2,t-1}$ of $B_2(S_2)$ satisfying (ii), (iii), (iv) of the definition of a $P_{t-1}$-reducer and also (i') above. We now construct colourings $\psi_1$ and $\psi_2$ of $B_{50}(v)$ satisfying the definition of a $P_t$-reducer.
\begin{itemize}
    \item On $B_2(S_1)$, $\psi_1$ and $\psi_2$ agree with $\psi_1^{S_1,t}$ and $\psi_2^{S_1,t}$ respectively.
    \item  On $B_2(S_2)$, $\psi_1$ and $\psi_2$ have the opposite colourings of $\psi_1^{S_2,t-1}$ and $\psi_2^{S_2,t-1}$.
    \item Outside $B_2(S_1)$ and  $B_2(S_2)$,  $\psi_1$ and $\psi_2$ are entirely blue, except on $N^{50}(v)$, where they are red.
\end{itemize}
We check that these colourings do indeed result in a $P_t$-reducer $B_{48}(v)$. Using property (i') of the pairs $\psi_1^{S_1,t}, \psi_2^{S_1,t}$ and $\psi_1^{S_2,t-1}, \psi_2^{S_2,t-1}$, and the fact that we use the opposite colourings on $S_2$, we see that $\psi_1$ and $\psi_2$ have an equal number of red vertices in total, satisfying property (i) of a $P_t$-reducer. Property (ii) holds by the third bullet point of the construction. Property (iii) is immediate from it holding for the colourings $\psi_1^{S_1,t}$, $\psi_2^{S_1,t}$, $\psi_1^{S_2,t-1}$ and $\psi_2^{S_2,t-1}$, since the property (ii) for these colourings ensures that the monochromatic components involving vertices whose colours change are fully contained within $B_2(S_1)$ and $B_2(S_2)$ respectively. For property (iv), note we lose exactly one red copy of $P_t$ when going from $\psi_1^{S_1,t}$ to $\psi_2^{S_1,t}$ on $B_2(S_1)$, while the number of blue copies of $P_t$ stays the same. On the other hand, going from $\psi_1^{S_2,t-1}$ to $\psi_2^{S_2,t-1}$ on $B_2(S_2)$ does not affect the number of the monochromatic copies of $P_t$ (by property (iii)). Thus in total exactly one red $P_t$ is lost, as required. \end{proof}

The above proof doesn't work for $t=3$ because our proof of Lemma~\ref{Lemma_unbalanced_reducer} doesn't work for $t=2$. Thus for  $t=3$, we need a different proof of the proposition.
\begin{proof}[Proof of Proposition~\ref{prop:reducerexistence} for $t=3$]
Let $Q=(q_0, \dots, q_{20})$ be a length $20$ geodesic within distance $20$ of $v$. For each $1 \le i \le 19$, let $r_i$ be the (unique) neighbour of $q_i$. As in the proof of Lemma~\ref{Lemma_unbalanced_reducer}, if $r_i=r_j$ for some $1 \leq i< j\leq 19$, we must have $j=i+ 1$ or $j=i+ 2$, and then we find a $P_3$-reducer using Lemma~\ref{Lemma_2_edges_on_path}. Thus we may assume that $r_1, \dots, r_{19}$ are all distinct.

\begin{claim}\label{Claim34}
We have at least one of the following (see Figure~\ref{FigureClaim34}):
\begin{enumerate}[(a)]
    \item For some $3 \le i \le 9$, none of the edges $r_ir_{i+1}, r_ir_{i+2}$ or $r_{i+1}r_{i+2}$ are present.
    \item For some $3 \le i \le 10$, the edge $r_{i}r_{i+1}$ is present and either \begin{itemize}
        \item  the edges $r_ir_{i+2}$ and $r_ir_{i+3}$ are absent, or
        \item the edges $r_{i-1}r_{i+1}$ and $r_{i-2}r_{i+1}$ are absent.
    \end{itemize} 
    \item For some $3 \le i \le 8$, the edge $r_{i}r_{i+1}$ is present, the edges $r_{i+1}r_{i+2}, r_{i+1}r_{i+3}, r_{i-1}r_i$ and $r_{i-2}r_i$ are absent, and either $r_ir_{i+3}$ or $r_{i-2}r_{i+1}$ is present. 
\end{enumerate}
\end{claim}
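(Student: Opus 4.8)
I would argue by contradiction: assume that none of (a), (b), (c) holds and derive a violation either of cubicity or of the fact that $Q$ is a geodesic. Throughout, I would use two standing facts about the off-path neighbours $r_1,\dots,r_{19}$. First, since $G$ is cubic and $r_i$ is already adjacent to $q_i$, each $r_i$ has \emph{at most two} neighbours among the $r_k$'s. Second, since $Q$ is a geodesic, whenever $r_a$ and $r_b$ are joined by a walk of $\ell$ edges using only vertices of $\{r_1,\dots,r_{19}\}$, concatenating with the edges $q_ar_a$ and $r_bq_b$ gives a $q_a$–$q_b$ walk of length $\ell+2$, so $|a-b|\le\ell+2$. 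In particular a chain $r_a\sim r_{a+2}\sim r_{a+4}\sim r_{a+6}$ is impossible (it would yield a $q_a$–$q_{a+6}$ walk of length $5$, while $d_G(q_a,q_{a+6})=6$), and the case $\ell=1$ recovers the already-noted fact that $r_a\sim r_b$ forces $|a-b|\le 3$.

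\textbf{Step 1: find a consecutive edge in a usable range.} If there were no edge $r_jr_{j+1}$ with $3\le j\le 8$, then the failure of (a) on each window $\{r_i,r_{i+1},r_{i+2}\}$ for $3\le i\le 7$ would force the only remaining option $r_ir_{i+2}$ to be present; but then $r_3\sim r_5\sim r_7\sim r_9$, contradicting the previous paragraph. Hence I may fix an edge $r_jr_{j+1}$ with $3\le j\le 8$, an index at which both (b) and (c) are applicable.

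\textbf{Step 2: squeeze the local structure, then contradict (b).} Since (b) fails at $i=j$, the presence of $r_jr_{j+1}$ forces $r_j$ to have a neighbour in $\{r_{j+2},r_{j+3}\}$ and $r_{j+1}$ to have a neighbour in $\{r_{j-1},r_{j-2}\}$; the degree-two bound makes these the only remaining neighbours, so $r_j\not\sim r_{j-1},r_{j-2}$ and $r_{j+1}\not\sim r_{j+2},r_{j+3}$. Now the failure of (c) at $i=j$ applies, since the four edges it prescribes absent are exactly the ones just shown absent; this forces $r_j\not\sim r_{j+3}$ and $r_{j-2}\not\sim r_{j+1}$, and combining with the consequences of (b) failing pins down $r_j\sim r_{j+2}$ and $r_{j-1}\sim r_{j+1}$. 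Thus the $r$-neighbourhoods of $r_j$ and $r_{j+1}$ are exactly $\{r_{j+1},r_{j+2}\}$ and $\{r_{j-1},r_j\}$. Applying the failure of (a) to the window $\{r_{j+1},r_{j+2},r_{j+3}\}$ (legal since $j+1\le 9$) and using that $r_{j+1}r_{j+2}$ and $r_{j+1}r_{j+3}$ are absent forces $r_{j+2}\sim r_{j+3}$. But then $r_{j+2}$ already has the two neighbours $r_j$ and $r_{j+3}$, so $r_{j+2}\not\sim r_{j+4},r_{j+5}$ — which is exactly the assertion that (b) holds at $i=j+2$, a legal index as $j+2\le 10$. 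This contradicts the assumption that (b) fails everywhere, finishing the proof.

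\textbf{Main obstacle.} Since the argument is a chain of forced deductions, no single step is genuinely deep; the delicate part is purely bookkeeping: choosing the window range in Step 1 so that the located edge $r_jr_{j+1}$ sits where both (b) and (c) can be invoked, making sure the terminal contradiction lands at an index where (b) is still asserted (this is why one wants a length-$20$ geodesic, keeping all of $r_{j-2},\dots,r_{j+5}$ in range), and correctly applying the bound ``$r_i$ has at most two $r$-neighbours, because of the edge $r_iq_i$'' every time a neighbourhood is closed off. One must also carefully match each deduction against the precise wording of (a)–(c), as those conditions are tailored exactly to the edges that the geodesic and degree constraints allow.
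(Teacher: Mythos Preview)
Your proof is correct and follows essentially the same route as the paper's: locate a consecutive edge $r_jr_{j+1}$ with $3\le j\le 8$ by combining the failure of (a) with the geodesic constraint, then use the failures of (b) and (c) at $i=j$ together with the degree bound to pin down the local neighbourhoods, apply the failure of (a) at $i=j+1$ to force $r_{j+2}r_{j+3}$, and finish by observing that (b) then holds at $i=j+2$. The only cosmetic difference is that in the final step the paper invokes the second bullet of (b) (using that $r_jr_{j+3}$ and $r_{j+1}r_{j+3}$ are absent), whereas you invoke the first bullet (using the degree bound on $r_{j+2}$ to rule out $r_{j+2}r_{j+4}$ and $r_{j+2}r_{j+5}$); both are valid.
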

\begin{figure}[h]
  \centering
    \includegraphics[width=0.6\textwidth]{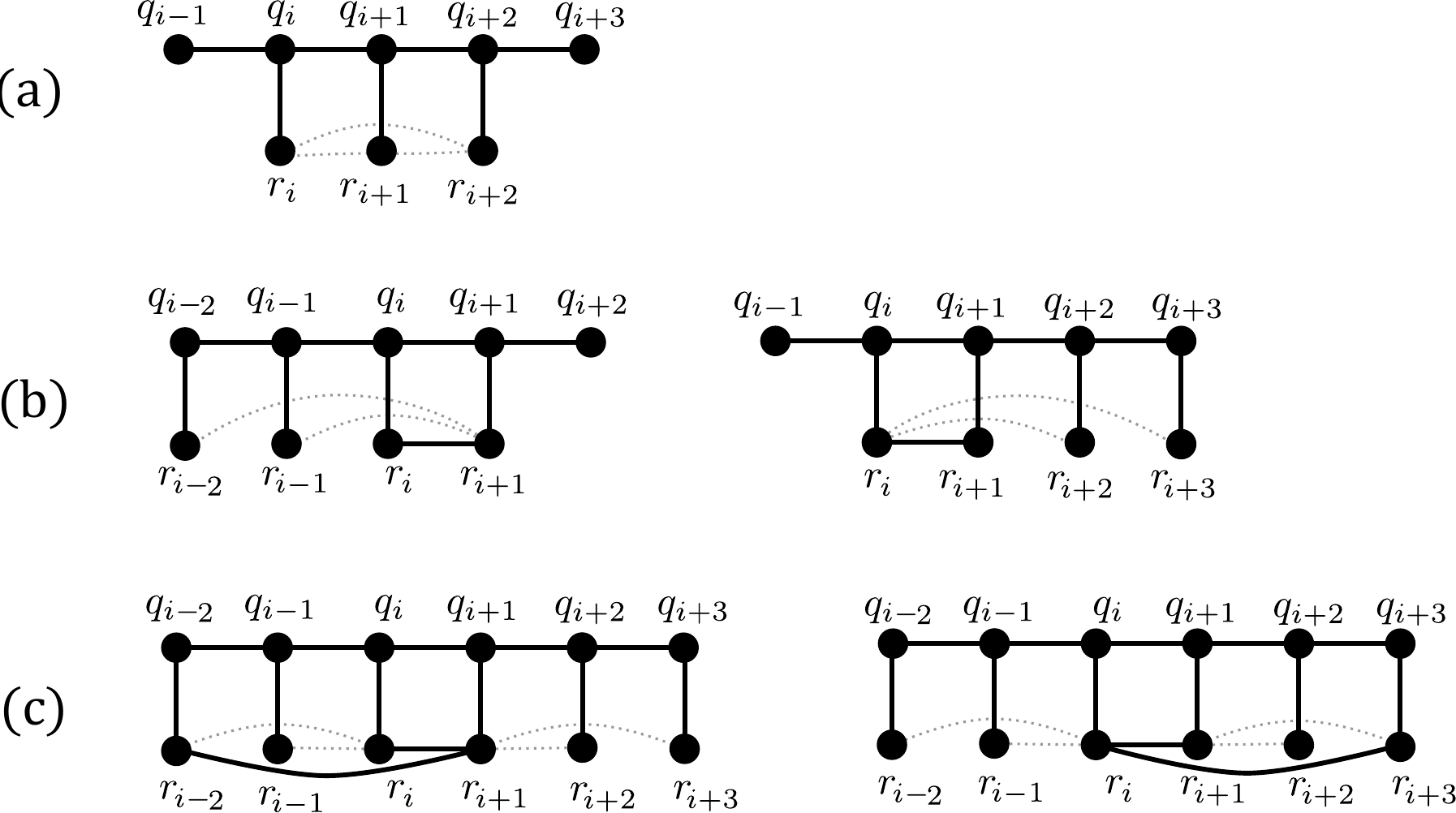}
  \caption{The three cases for Claim~\ref{Claim34}. Solid lines represent edges that are present while dashed lines represent edges which are absent (whereas unmarked edges may or may not be present).}\label{FigureClaim34}
\end{figure}

\begin{proof}
First observe that the three edges $r_3 r_5, r_5 r_7$ and $r_7 r_9$ cannot all be present, as that would contradict $Q$ being a geodesic. Hence, if (a) does not hold, we must have the edge $r_i r_{i+1}$ for some $3 \le i \le 8$.

Next, suppose that $r_{i+1}r_{i+2}$ is also present. In this case, $N(r_{i+1})=\{q_{i+1}, r_i, r_{i+2}\}$, which implies that $r_{i-1}r_{i+1}$ and $r_{i-2}r_{i+1}$ are both absent (since $r_{i-1}, r_{i-2}$ are distinct from $q_{i+1}, r_i, r_{i+1}$). This leaves us in case (b). The same argument shows that we are in case (b) if any of the edges   $r_{i+1}r_{i+3}, r_{i-1}r_i$ or $r_{i-2}r_i$ were present. Thus we can assume that $r_{i+1}r_{i+2}, r_{i+1}r_{i+3}, r_{i-1}r_i$ and $r_{i-2}r_i$ are absent.

If (c) doesn't hold, then both $r_ir_{i+3}$ and $r_{i-2}r_{i+1}$ must be absent. Then, to avoid case (b), both $r_ir_{i+2}$ and $r_{i-1}r_{i+1}$ must be present. Hence $N(r_i)=\{q_i, r_{i+1}, r_{i+2}\}$ and $N(r_{i+1})=\{q_{i+1}, r_{i-1}, r_i\}$.
Recalling that  $r_{i+1}r_{i+2}$ and $r_{i+1}r_{i+3}$ are absent, note that $r_{i+2}r_{i+3}$ must be present, as otherwise we would be in case (a) for $i' = i+1$. However, as $r_ir_{i+3}$ and $r_{i+1}r_{i+3}$ are both absent, we are then in case (b) with $i'=i+2$.
\end{proof}
We now find $P_3$-reducers in each of the above three cases. 
\begin{enumerate}[(a)]
    \item Set $R=\{q_{i-1}, q_i, q_{i+1}, q_{i+2}, q_{i+3}, r_i, r_{i+1}, r_{i+2} \}$ and define colourings $\psi_1, \psi_2$ on $R$ as follows: \begin{figure}[h]
  \centering
    \includegraphics[width=0.7\textwidth]{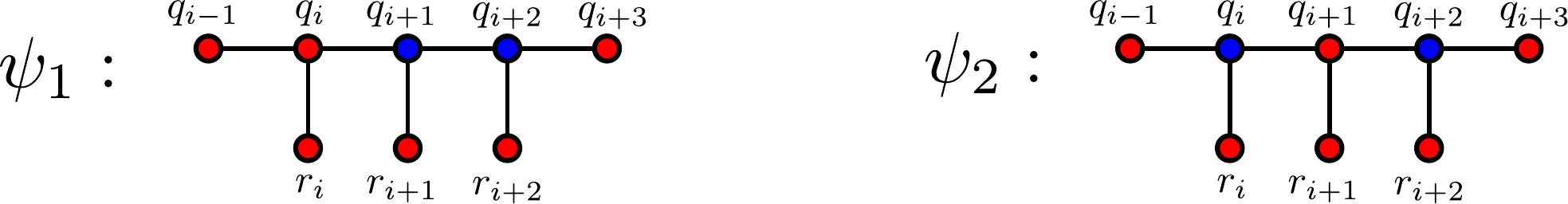}
\end{figure}

Define both $\psi_1$, and $\psi_2$ to be blue on $N(R)$ and red on  $N^2(R)$. It is immediate that this colouring satisfies the definition of $P_3$-reducer. 

    \item Without loss of generality, we may suppose that $r_ir_{i+1}$ is present and the edges $r_ir_{i+2}, r_ir_{i+3}$ are absent (the other case is symmetric to this by reversing the order of the path $Q$). Let $u$ be the third neighbour of $r_i$, i.e. $N(r_{i})=\{q_i, r_{i+1}, u\}.$
    Consider the path $Q'=(u, r_i, q_i, q_{i+1}, q_{i+2}, q_{i+3})$. We claim that this is an induced path. Indeed the edges $r_iq_{j}$ are all absent for $j\neq i$, so the only way this could be non-induced is if $u=r_{i+1},  r_{i+2}$ or $r_{i+3}$. But $u$ was the third neighbour of $r_i$ so $u \neq r_{i+1}$ by definition, and $r_{i+2}, r_{i+3}\not\in N(r_i)$ by the definition of case (b). Thus $Q'$ is indeed induced, and by Lemma~\ref{Lemma_2_edges_on_path} (applied with $v=r_{i+1}$) there is a $P_3$-reducer in $B_3(Q')$.
    
    \item 
     Without loss of generality, we may suppose that $r_ir_{i+1}$ and $r_{i}r_{i+3}$ are present  and $r_{i+1}r_{i+2},$ $r_{i+1}r_{i+3},$ $r_{i-1}r_i,$ $r_{i-2}r_i$ are absent (the other case is symmetric).
     Set $R=\{q_{i-1}, q_i, q_{i+1}, q_{i+2}, q_{i+3}, r_i, r_{i+1}, r_{i+2}, r_{i+3} \}$ and define colourings $\psi_1, \psi_2$ on $R$ as shown below. Extend both $\psi_1$ and $\psi_2$ to be blue on $N(R)$ and red on  $N^2(R)$. Since $q_i, r_i, q_{i+1}$ have no neighbours in $N(R)$, it is easy to check that this colouring satisfies the definition of $P_3$-reducer.
\end{enumerate}

\begin{figure}[h!]
  \centering
    \includegraphics[width=0.7\textwidth]{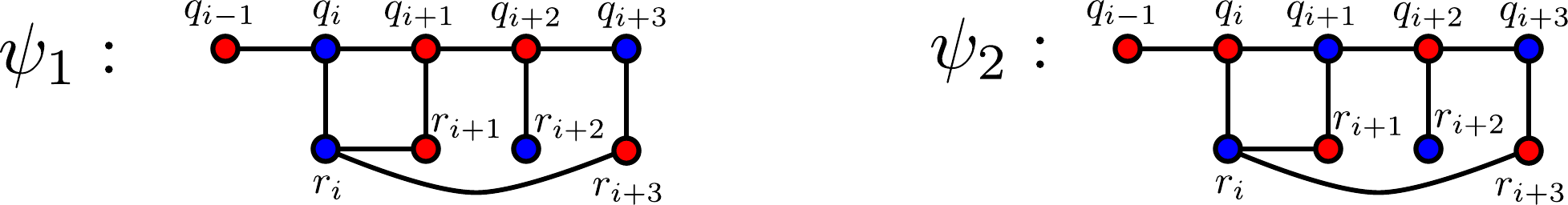}
\end{figure}

\end{proof}

\section{Concluding remarks} \label{sec:conclusion}

In this paper we have proven Ando's conjecture for all large connected cubic graphs. While this only leaves a finite number of graphs to be checked, and there is room to optimise the constants in our proof, there will still be too many cases to be handled computationally. Hence, a complete resolution of Conjecture~\ref{conj:ando} will likely require some additional ideas. In this section we indicate some ways in which our proof could be modified, which might help make progress towards the full conjecture, and close with some related open problems.

\paragraph{A simpler starting block}

While our proof is relatively short, one could argue that it is not fully self-contained, as we use Thomassen's theorem, which is already a very significant result. However, it is not crucial in our proof that, in the partition of the edges of $G$ into linear forests $F_1$ and $F_2$, the paths have length at most five. We could therefore replace Theorem~\ref{thm:thomassen} with one of its predecessors (the theorems of Aldred and Wormald~\cite{AW98} or Jackson and Wormald~\cite{JW96}), which have simpler proofs, but allow for longer paths. Although this comes at the cost of requiring $P_t$-reducers for larger values of $t$, our constructions readily generalise to longer paths. This is especially easy when we assume $G$ has large girth, resulting in a truly short proof of this special case.

\paragraph{Fewer reducers}

Alternatively, one might seek to reduce the amount of work done in Section~\ref{sec:reducers}, when constructing the $P_t$-reducers. A potential route to simplification lies in the observation that, when using Theorem~\ref{thm:thomassen}, it was not very important that the paths in $F_1$ were so strongly bounded in length. Indeed, we only used the lengths of the paths in $F_1$ to bound the Lipschitz constant $c$ in our application of Theorem~\ref{thm:mcdiarmid}, and we can afford for this to be as large as $n^{o(1)}$. On the other hand, if we can limit the lengths of the paths in $F_2$ to some $\ell \le 4$, then we would only need to construct $P_t$-reducers for $2 \le t \le \ell+1$.

\begin{ques} \label{ques:asymmetric}
What is the smallest $\ell$ for which the edges of any connected cubic graph on $n$ vertices can be partitioned into two spanning linear forests $F_1$ and $F_2$, such that the paths in $F_1$ are of length $n^{o(1)}$, and the paths in $F_2$ are of length at most $\ell$?
\end{ques}

It is worth noting that, while the five in Theorem~\ref{thm:thomassen} is best possible, the examples of tightness given by Thomassen~\cite{Tho99} are the two cubic graphs on six vertices. It would be interesting to know if there are arbitrarily large tight constructions, or if, when dealing with large connected graphs, one can achieve $\ell = 4$ even in the symmetric setting. Furthermore, we can weaken Question~\ref{ques:asymmetric}, as in our application $F_1$ does not have to be a linear forest, but rather a bipartite graph with bounded components.

\paragraph{Stronger conjectures}

While Theorem~\ref{thm:main} sheds light on the structure of large cubic graphs, showing that they can be partitioned into isomorphic induced subgraphs, it does not directly address the motivating question raised in Section~\ref{sec:intro}, as there are no guarantees that these subgraphs are simple. However, by analysing our proof, one can obtain some further information about the subgraphs obtained. As stated in Proposition~\ref{prop:random}, the only monochromatic components in the initial random colouring are paths of length at most five. When we then use the $P_t$-reducers to make the subgraphs isomorphic, we can introduce more complicated monochromatic components. However, since the $P_t$-reducers are all isolated within balls of bounded radius, it follows that the components in the isomorphic subgraphs are of bounded size.

In particular, if we assume that our connected graph has large girth,\footnote{By carefully considering the $P_t$-reducers from Section~\ref{sec:largegirth}, it suffices to assume girth at least $15$.} it follows that the isomorphic subgraphs are forests. Moreover, we need never have vertices of degree three in the isomorphic subgraphs, as these can be recoloured (in pairs) to become isolated vertices of the opposite colour. Thus, we in fact partition large connected cubic graphs of large girth into isomorphic linear forests. It was conjectured by Abreu, Goedgebeur, Labbate and Mazzuoccolos~\cite{AGLM19} that every cubic graph should admit such a partition; the challenge lies in removing the girth condition.

Ban and Linial~\cite{BL16} went even further, conjecturing that much more should be true when we restrict our attention to two-edge-connected cubic graphs.

\begin{conj} \label{conj:banlinial}
The vertices of every bridgeless cubic graph, with the exception of the Petersen graph, can be two-coloured such that the two colour classes induce isomorphic matchings.
\end{conj}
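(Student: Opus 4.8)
The natural starting point is the same double-counting observation used at the end of the proof of Theorem~\ref{thm:main}. If $\varphi$ is a bisection of a cubic graph $G$ in which no vertex has two neighbours of its own colour, then each colour class induces a disjoint union of edges and isolated vertices, and comparing the total degree of the red vertices (which equals $3n/2$) with the number of edges crossing between the two classes forces the red and blue classes to induce equally many edges. Hence both induced subgraphs are matchings on $n/2$ vertices with the same number of edges, so they are isomorphic. Consequently Conjecture~\ref{conj:banlinial} is equivalent to the statement that every bridgeless cubic graph other than the Petersen graph admits a \emph{balanced} $2$-colouring in which each colour class has maximum degree at most $1$; equivalently, there is a matching $M \subseteq E(G)$ for which $G - M$ is bipartite and whose components can be combined so as to split $V(G)$ into two equal halves.

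This reformulation immediately handles the $3$-edge-colourable case: given a proper $3$-edge-colouring, let $M$ be one colour class, which is a perfect matching; then $G - M$ is a disjoint union of \emph{even} cycles, and colouring each cycle alternately yields a bisection with the desired property. Thus only bridgeless cubic graphs of chromatic index four --- snarks, in the loose sense --- remain, and the Petersen graph is the smallest of these. Since there are infinitely many snarks, one still needs a general argument, and here I would follow the two-stage strategy of the present paper. Stage one: produce a starting colouring in which each colour class already induces maximum degree at most $1$ --- such a colouring exists for \emph{every} cubic graph by Lov\'asz's theorem~\cite{Lov66} applied with $d_1 = d_2 = 1$ --- and which is close to balanced. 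To obtain near-balance one can pass to a random modification: having fixed a perfect matching (Petersen's theorem is precisely what uses bridgelessness here) and the complementary $2$-factor, colour its cycles independently at random, subject to the forced monochromatic edge on each odd cycle, and apply McDiarmid's inequality (Theorem~\ref{thm:mcdiarmid}) to control the discrepancy between the colour classes. The subtlety is that this naive scheme can create a vertex with two monochromatic neighbours (its matching partner together with an endpoint of an odd-cycle edge), so the randomisation must be set up more carefully --- for instance by first contracting the odd cycles, or by randomising directly over matchings $M$ with $G - M$ bipartite.

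Stage two: correct the remaining discrepancy by local recolouring, exactly as in Section~\ref{sec:correction}, but with a new type of gadget --- a ``matching reducer'', an induced subgraph admitting two insulated colourings (boundary coloured alternately, in the style of Definition~\ref{def:reducer}) that both keep every colour class of maximum degree at most $1$ and differ in their number of red vertices by a fixed nonzero amount. Proving an analogue of Proposition~\ref{prop:reducerexistence}, that such reducers are ubiquitous in the local balls of any large bridgeless cubic graph, should be feasible by geodesic arguments in the spirit of Section~\ref{sec:reducers}, and exhibiting $\Omega(n)$ pairwise disjoint copies then lets one absorb an imbalance of size $O(\sqrt{n \log n})$, or indeed of linear size. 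The real obstacle, however, is not the large graphs but the exact phrasing ``all but the Petersen graph'': the probabilistic method only controls graphs above some threshold size, whereas the conjecture asserts a \emph{unique} exception. One would therefore additionally need a structural argument covering all small bridgeless cubic graphs, and --- most delicately --- a proof that the Petersen graph really is the only obstruction, presumably by showing that any hypothetical small counterexample would be forced to have a $2$-factor consisting only of long odd cycles (or some similarly rigid configuration) and then ruling this out directly. Pinning down this boundary case, rather than establishing the asymptotic statement, is where I expect the main difficulty to lie.
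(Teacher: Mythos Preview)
The paper does not prove Conjecture~\ref{conj:banlinial}; it is quoted in Section~\ref{sec:conclusion} as an open problem, with the remark that it is known only for $3$-edge-colourable and claw-free cubic graphs. So there is no ``paper's own proof'' to compare against, and what you have written is --- as you yourself signal --- an outline with several acknowledged gaps rather than a proof.

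That said, there is a concrete obstruction to porting the method of this paper that you understate. The insulation mechanism in Definition~\ref{def:reducer} works by colouring \emph{all} of $N(R)$ blue and \emph{all} of $N^2(R)$ red; this is what confines the effect of recolouring $R$ to $B_2(R)$. But a monochromatic layer $N(R)$ will in general contain adjacent vertices (indeed, Lemma~\ref{lem:geonhd} is devoted to showing this does \emph{not} happen only under a girth-$7$ hypothesis), and even when $N(R)$ is independent, each vertex of $N(R)$ typically has two neighbours in $N^2(R)$, both red. Either way the max-degree-$1$ constraint in each colour class is destroyed on the boundary. Your phrase ``boundary coloured alternately'' suggests a different insulation, but any scheme that keeps both colour classes of maximum degree $1$ on $N(R)\cup N^2(R)$ cannot simultaneously block all monochromatic paths from escaping $B_2(R)$: a vertex of $N^2(R)$ with one neighbour in $N(R)$ and two outside must match one of the outside neighbours in colour, and then the recolouring inside $R$ can propagate. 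In short, the very feature that makes the reducers in Section~\ref{sec:reducers} work --- a solid monochromatic collar --- is incompatible with the matching constraint, and designing a replacement gadget is not a matter of ``geodesic arguments in the spirit of Section~\ref{sec:reducers}'' but a genuinely new problem.

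Your reduction in the first paragraph is correct and is the standard reformulation, and you are right that the asymptotic-versus-exact issue (ruling out all small exceptions except Petersen) is a separate difficulty that the probabilistic method cannot touch. But the more immediate gap is the one above: before worrying about small graphs, one would need a local-correction gadget that both insulates and preserves maximum degree $1$, and neither you nor the paper supplies one.
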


\noindent The conjecture remains open, although it has been proven for three-edge-colourable graphs by Ban and Linial~\cite{BL16} and for claw-free graphs by Abreu, Goedgebeur, Labbate and Mazzuoccolo~\cite{AGLM18}, while Esperet, Mazzuoccolo and Tarsi~\cite{EMT17} proved that any counterexample must have circular flow number equal to five. It would be very interesting to see to what extent our methods can be applied to this conjecture, as well as to Wormald's conjecture on partitioning the edges of cubic graphs into isomorphic linear forests.

\subsection*{Acknowledgement} We are grateful to the anonymous reviewers for their valuable comments and helpful suggestions.

\end{document}